\newcommand{\Si}{\Sigma}
\newcommand{\sub}{\subseteq}
\newcommand{\pr}{\textrm{pr}}
\newcommand{\tuple}[1]{\vec{#1}}
\newcommand {\indep}[3] {#2 ~\bot_{#1}~ #3}
\newcommand {\indepc}[2] {#1 ~\bot~ #2}
\newcommand{\Dom}{\textrm{Dom}}
\newcommand{\Var}{\textrm{Var}}
\newcommand{\Fr}{\textrm{Fr}}
\newcommand{\N}{\mathbb{N}}
\newcommand{\Po}{\mathcal{P}}
\newcommand{\M}{\mathcal{M}}
\newcommand{\on}{\exists}
\newcommand{\ja}{\wedge}
\def\dep{=\!\!}
\newcommand{\indlogic}{\FO (\bot_{\rm c})}
\newcommand{\inclogic}{\FO (\subseteq)}
\newcommand{\indNRlogic}{\FO (\bot)}
\newcommand{\deplogic}{\FO (\dep(\ldots ))}
\newcommand{\vdasn}{\vdash^*}
\newcommand{\der}{\Si \vdasn }
\def\dep{=\!\!}
\newcommand{\FO}{{\rm FO}}
\theoremstyle{plain}
\newtheorem{thm}[equation]{Theorem}
\newtheorem{lem}[equation]{Lemma}
\newtheorem{prop}[equation]{Proposition}
\newtheorem{cor}[equation]{Corollary}
\def\ci{\perp\!\!\!\perp}
\theoremstyle{definition}
\newtheorem{maa}[equation]{Definition}
\newtheorem{esim}[equation]{Example}
\begin{document}

\title{A finite axiomatization of conditional independence and inclusion dependencies\thanks{The authors were supported by grant 264917 of the Academy of Finland.}}

\author{Miika Hannula \thanks{Department of Mathematics and Statistics, University of Helsinki, Finland. \texttt{miika.hannula@helsinki.fi}} \and Juha Kontinen\thanks{Department of Mathematics and Statistics, University of Helsinki, Finland. \texttt{juha.kontinen@helsinki.fi}}}

\maketitle
\begin{abstract}
We present a complete finite axiomatization of the unrestricted implication problem for inclusion and conditional independence atoms in the context of dependence logic. For databases, our  result implies a  finite  axiomatization of the unrestricted implication problem for inclusion, functional, and embedded multivalued dependencies in the unirelational case. 
\end{abstract}

\section{Introduction}

We formulate a finite axiomatization of the implication problem for inclusion and conditional independence atoms (dependencies) in the dependence logic context. The input of this problem is given by a finite set  $\Sigma\cup \{\phi\}$ consisting  of conditional independence atoms and inclusion atoms, and the question to decide is whether the following logical consequence holds
\begin{equation}\label{AX}
\Sigma \models \phi.
\end{equation}
Independence logic \cite{gradel10} and inclusion logic \cite{galliani12} are recent variants of dependence logic the semantics of which are defined over sets of assigments (teams) rather than a single assignment as in first-order logic. By viewing a team  $X$  with domain  $\{x_1,\ldots,x_k\}$  as a relation schema $X[\{x_1,\ldots,x_k\}]$, our results provide a finite  axiomatization for the unrestricted implication problem of  inclusion, functional, and embedded multivalued database dependencies over $X[\{x_1,\ldots,x_k\}]$.

Dependence logic \cite{vaananen07}  extends first-order logic by dependence atomic formulas 
\begin{equation}\label{da}\dep(x_1,\ldots,x_n)
\end{equation} the meaning of which is that the value of $x_n$ is functionally determined by the values of $x_1,\ldots, x_{n-1}$. 
%The semantics of dependence logic is defined using sets $X$ of assignments $X$  rather than a single assignment as in first-%order logic. 
Independence logic replaces the dependence atoms by independence atoms 
\[\vec{y}\bot_{\vec{x}} \vec{z},\]
 the intuitive meaning of which is that, with respect to any fixed value of  $ \vec x$, the variables $\vec y$  are totally independent of the variables $\vec z$. Furthermore, inclusion logic is based on inclusion atoms of the form 
\[ \vec{x}\subseteq \vec{y},\]
 with the meaning that all the values of $\vec{x}$ appear also as values for $\vec{y}$. By viewing a team  $X$ of assignments  with domain  $\{x_1,\ldots,x_k\}$  as a relation schema $X[\{x_1,\ldots,x_k\}]$, the atoms $\dep(\vec{x})$,   $\vec{x}\subseteq \vec{y}$, and  $\vec{y}\bot_{\vec{x}} \vec{z}$ correspond to functional, inclusion, and embedded multivalued database dependencies. Furthermore,  the atom $\dep(x_1,\ldots,x_n)$ can be alternatively expressed as  \[x_n\bot_{x_1\ldots x_{n-1}} x_n,\] 
hence our results for independence atoms cover also the case where dependence atoms are present.   

The team semantics of dependence logic is a very flexible logical framework in which various notions of dependence and independence can be formalized.  Dependence logic and its variants  have  turned out to be applicable in various areas. For example, V\"a\"an\"anen and Abramsky have recently  axiomatized and formally proved Arrow's Theorem from social choice theory and, certain No-Go theorems from the foundations of quantum mechanics  in the context of independence logic \cite{AV}. Also, the pure independence atom $\vec{y}\bot \vec{z}$ and its axioms has various concrete interpretations such as independence   $X\ci Y$  between two sets of random variables  \cite{geiger91}, and  independence in vector spaces and algebraically closed fields  \cite{GP}. 

Dependence logic is equi-expressive with existential second-order logic (ESO). Furthermore, the  set of valid formulas of dependence logic has the same complexity as that of full second-order logic,  hence it is not possible to  give a complete axiomatization of dependence logic \cite{vaananen07}. However,  by restricting attention to syntactic fragments  \cite{FYJV, hannula13,kontinenv11} or by modifying the semantics \cite{btxdoc} complete axiomatizations have recently been obtained.  
The axiomatization presented in this article   is based on the classical characterization of logical implication between dependencies in terms of  the \emph{Chase} procedure \cite{Maier:1979:TID:320107.320115}. The novelty in our approach is the use of the so-called \emph{Lax} team semantics  of independence logic to simulate the chase on the logical level using only inclusion and independence atoms and existential quantification.  

In database theory, the implication problems of various types of database dependencies have been extensively studied starting from Armstrong's axiomatization for functional dependencies \cite{armstrong74}. Inclusion dependencies were  axiomatized in \cite{casanova83}, and an axiomatization for pure independence atoms is also known (see \cite{DBLP:journals/jcss/Paredaens80,geiger91,DBLP:conf/wollic/KontinenLV13}). On the other hand, the implication problem of embedded multivalued dependencies, and of inclusion dependencies and functional dependencies together, are  known to be undecidable \cite{Herrmann1995221, Herrmann2006, chandra85}, hence simple axiomatization (that would yield a decision procedure) is deemed impossible. On the other hand, the unrestricted implication problem of inclusion and functional dependencies has been finitely axiomatized in \cite{mitchell83} using a so-called \emph{Attribute Introduction Rule} that allows new attribute names representing derived attributes to be introduced into deductions. These new attributes can be thought of as implicitly existentially quantified. Our \emph{Inclusion Introduction Rule} is essentially equivalent to the Attribute Introduction Rule of \cite{mitchell83}.  It is also worth noting that the chase procedure has been used to axiomatize the unrestricted implication problem of various classes of dependencies, e.g., \emph{Template Dependencies} \cite{DBLP:journals/jacm/SadriU82}, and  \emph{Typed Dependencies} \cite{DBLP:journals/siamcomp/BeeriV84}. Finally we note that the role of inclusion atom in our axiomatization has some similarities to the axiomatization of the class of \emph{Algebraic Dependencies} \cite{DBLP:journals/jcss/YannakakisP82}.

\section{Preliminaries}
In this section we define team semantics and introduce dependence, independence and inclusion atoms. The version of team semantics presented here is the Lax one, originally introduced in \cite{galliani12}, which will turn out to be valuable for our purposes due to its interpretation of existential quantification.
\subsection{Team semantics}
The semantics is formulated using sets of assignments called teams instead of single assignments. Let $\M$ be a model with domain $M$. An \emph{assignment} $s$ of $\M$ is a finite mapping from a set of variables into $M$. A \emph{team} $X$ over $\M$ with domain $\Dom(X) = V$ is a set of assignments from $V$ to $M$. For a subset $W$ of $V$, we write $X \upharpoonright W$ for the team obtained by restricting all the assignments of $X$ to the variables in $W$.

If $s$ is an assignment, $x$ a variable, and $a \in A$, then $s[a/x]$ denotes the assignment (with domain $\Dom(s) \cup \{x\}$) that agrees with $s$ everywhere except that it maps $x$ to $a$. For an assignment $s$, and a tuple of variables $\vec{x} = (x_1,...,x_n)$, we sometimes denote the tuple $(s(x_1), . . . , s(x_n))$ by $s(\vec{x})$. For a formula $\phi$, $\Var(\phi)$ and $\Fr(\phi)$ denote the sets of variables that appear in $\phi$ and appear free in $\phi$, respectively. For a finite set of formulas $\Si = \{\phi_1, \ldots ,\phi_n\}$, we write $\Var(\Si)$ for $\Var(\phi_1) \cup \ldots \cup \Var(\phi_n)$, and define $\Fr(\Si)$ analogously. When using set operations  $\vec{x} \cup \vec{y}$ and $\vec{x}\setminus \vec{y} $  for sequences of variables $\vec{x}$ and $\vec{y}$, then these sequences are interpreted as the sets of elements of these sequences. 

Team semantics is defined for first-order logic formulas as follows:

%There exist two variants of team semantics, called \emph{strict} and \emph{lax}, which differ with respect to the interpretation of disjunction and existential quantification. We first give the definition of the lax version of team semantics which is the framework in this paper, and then present the strict version. We will write $\M \models_X \phi$ and $\M \models^S_X \phi$ for satisfaction in lax and strict team semantics, respectively.

\begin{maa}[Team semantics]\label{team}
Let $\M$ be a model and let $X$ be any team over it. Then 
\begin{itemize}
\item If $\phi$ is a first-order atomic or negated atomic formula, then $\M \models_X \phi$ if and only if for all $s \in X$, $\M \models_s \phi$ (in Tarski semantics).
\item $\M \models_X \psi \vee \theta$ if and only if there are $Y$ and $Z$ such that $X = Y \cup Z$ and $\M \models_Y \psi$ and $\M \models_Z \theta$.
\item $\M \models_X \psi \wedge \theta$ if and only if $\M \models_X \psi$ and $\M \models_X \theta$.
\item $\M \models_X \exists v \psi$ if and only if there is a function $F : X \rightarrow \mathcal P(M) \backslash \{\emptyset\}$ such that $\M \models_{X[F/v]} \psi$, where $X[F/v] = \{s[m/v] : s \in X, m \in F(s)\}$.
\item $\M \models_X \forall v \psi$ if and only if $\M \models_{X[M/v]} \psi$, where $X[M/v] = \{s[m/v] : s \in X, m \in M\}$.
\end{itemize}
\end{maa}
The following lemma is an immediate consequence of Definition \ref{team}.
\begin{lem}\label{ketjulemma}
Let $\M$ be a model, $X$ a team and $\on x_1 \ldots \on x_n \phi$ a formula in team semantics setting where $x_1, \ldots ,x_n$ is a sequence of %pairwise distinct
variables. Then
\begin{equation*}\M \models_X \on x_1 \ldots \on x_n \phi\textrm{ iff for some function }F:X\rightarrow \Po(M^n)\setminus\{\emptyset\}\textrm{, }\M \models_{X[F/x_1\ldots x_n]} \phi
\end{equation*} 
where $X[F/x_1\ldots x_n]:= \{s[a_1/x_1]\ldots[a_n/x_n] \mid (a_1, \ldots ,a_n) \in F(s)\}$.
\end{lem}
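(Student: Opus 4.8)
The plan is to prove the equivalence by induction on $n$. For $n=1$ there is nothing to do: the statement is literally the clause for $\on v \psi$ in Definition \ref{team}, once we identify $\Po(M^1)$ with $\Po(M)$ and read $X[F/x_1]$ as the team $X[F/v]$ of that clause.

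For the inductive step, suppose the claim holds for $n$ and consider $\on x_1\ldots\on x_{n+1}\phi$, parsed as $\on x_1(\on x_2\ldots\on x_{n+1}\phi)$. By the single-quantifier clause of Definition \ref{team}, $\M\models_X\on x_1(\on x_2\ldots\on x_{n+1}\phi)$ holds iff there is $F_1:X\to\Po(M)\setminus\{\emptyset\}$ with $\M\models_{X[F_1/x_1]}\on x_2\ldots\on x_{n+1}\phi$, and applying the induction hypothesis to the team $X[F_1/x_1]$ this holds iff, in addition, there is $G:X[F_1/x_1]\to\Po(M^n)\setminus\{\emptyset\}$ with $\M\models_{X[F_1/x_1][G/x_2\ldots x_{n+1}]}\phi$. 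Hence it suffices to show that the teams of the form $X[F/x_1\ldots x_{n+1}]$ for $F:X\to\Po(M^{n+1})\setminus\{\emptyset\}$ are exactly the teams of the form $X[F_1/x_1][G/x_2\ldots x_{n+1}]$ arising from such pairs $(F_1,G)$; since equal teams satisfy exactly the same formulas, the chain of equivalences then closes.

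From a pair $(F_1,G)$ I would define $F(s):=\{(a_1,\ldots,a_{n+1}) : a_1\in F_1(s),\ (a_2,\ldots,a_{n+1})\in G(s[a_1/x_1])\}$. This set is nonempty because $F_1(s)\neq\emptyset$ and $G$ takes nonempty values on every $s[a_1/x_1]\in X[F_1/x_1]$, and unwinding the definitions shows that $X[F/x_1\ldots x_{n+1}]$ and $X[F_1/x_1][G/x_2\ldots x_{n+1}]$ are literally the same set of assignments. Conversely, from a given $F$ I would let $F_1(s)$ be the projection of $F(s)$ onto the first coordinate and, for $s'\in X[F_1/x_1]$, set $G(s'):=\bigcup\{\,\{(a_2,\ldots,a_{n+1}) : (s'(x_1),a_2,\ldots,a_{n+1})\in F(s)\} : s\in X,\ s[s'(x_1)/x_1]=s'\,\}$; one checks this is nonempty and that again the two teams coincide by a routine double inclusion.

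The only step that requires genuine care — and the main obstacle — is the backward construction of $G$. Because $X[F_1/x_1]$ is a \emph{set}, a single assignment $s'$ in it may be produced from several different $s\in X$ (this can occur when $x_1$ already belongs to $\Dom(X)$), so $G(s')$ must pool the contributions of all such $s$, and one has to verify that this pooling neither creates nor destroys assignments when forming $X[F_1/x_1][G/x_2\ldots x_{n+1}]$. Once this bookkeeping is done, the argument goes through with no assumption that the variables $x_1,\ldots,x_n$ are distinct.
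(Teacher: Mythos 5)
Your argument is correct, and it is exactly the routine unwinding that the paper leaves implicit by declaring the lemma ``an immediate consequence of Definition \ref{team}'': induction on the number of quantifiers plus the observation that teams of the form $X[F/x_1\ldots x_{n+1}]$ coincide with those of the form $X[F_1/x_1][G/x_2\ldots x_{n+1}]$. Your extra care about pooling the contributions of different $s\in X$ that yield the same $s[a_1/x_1]$ in the Lax semantics is a valid and welcome detail, not a deviation from the intended proof.
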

If $\M \models_X \phi$, then we say that $X$ \emph{satisfies} $\phi$ in $\M$.
If $\phi$ is a sentence (i.e. a formula with no free variables), then we say that $\phi$ is \emph{true} in $\M$, and write $\M \models \phi$, if $\M \models_{\{\emptyset\}} \phi$ where $\{\emptyset\}$ is the team consisting of the empty assignment. Note that $\{\emptyset\}$ is different from the \emph{empty team} $\emptyset$ containing no assignments.

In the team semantics setting, formula $\psi$ is a \emph{logical consequence} of $\phi$, written $\phi\Rightarrow \psi$, if for all models $\M$ and teams $X$, with $\Fr(\phi)\cup \Fr(\psi)\subseteq \Dom(X)$, 
$$\M\models_X\phi \Rightarrow   \M\models_X\psi.$$
%Sentences $\phi$ and $\psi$ are said to be logically equivalent if for all models $\M$,
%$$\M\models \phi \Leftrightarrow   \M\models \psi.$$
Formulas $\phi$ and $\psi$ are said to be \emph{logically equivalent} if $\phi \Rightarrow \psi$ and $\psi \Rightarrow \phi$.
Logics $\mathcal{L}$ and  $\mathcal{L}'$ are said to be equivalent, $\mathcal{L}=\mathcal{L}'$, if  every $\mathcal{L}$-sentence $\phi$ is equivalent to some $\mathcal{L}'$-sentence $\psi$, and vice versa. %For a set of formulas $\Si\cup\{\phi\}$, we say that $\Si$ \emph{entails} $\phi$, written $\Si \models \phi$, if $\M \models_X \phi$ for all $\M$ and $X$ such that $\M \models_X \Si$. %If $\phi$ is entailed by the empty set, then we write $\models \phi$.

%Strict semantics differs from lax semantics in the interpretation of disjunction and existential quantification which are in the strict case defined as follows.
%\begin{itemize}
%\item $\M \models^S_X \psi \vee \theta$ if and only if there are $Y$ and $Z$ such that $Y \cup Z=X$, $Y\cap Z = \emptyset$ and $\M \models^S_Y \psi$ and $\M \models^S_Z \theta$.
%\item $\M \models^S_X \exists v \psi$ if and only if there is a function $F : X \rightarrow M$ such that $\M \models^S_{X[F/v]} \psi$, where $X[F/v] = \{s[F(s)/v] : s \in X\}$.
%\end{itemize}
%In the case of disjunction the difference is that $Y$ and $Z$ must be disjoint. In the case of existential quantification, the difference is that $F$ is a function into $M$ (instead of being a function into $\Po (M)\setminus \{\emptyset\}$ as in the lax version).

\subsection{Dependencies in team semantics}
%The advantage of team semantics is that it allows us to extend first-order logic with new atoms (which often correspond to the dependency notions used in database theory) and new operators. 
Dependence, independence and inclusion atoms are given the following semantics. 
\begin{maa}
Let $\tuple x$ be a tuple of variables and $y$ a variable. Then $\dep(\tuple x ,y)$ is a \emph{dependence atom} with the semantic rule
\begin{itemize}
\item $\M \models_X \dep(\tuple x ,y)$ if and only if for any $s, s' \in X$ with $s(\vec{x})=s'(\vec{x})$, $s(y)=s'(y)$.
\end{itemize}
Let $\tuple x$, $\tuple y$ and $\tuple z$ be tuples of variables. Then $\indep{\tuple x}{\tuple y}{\tuple z}$ is a \emph{conditional independence atom} with the semantic rule
\begin{itemize}
\item $\M \models_X \indep{\tuple x}{\tuple y}{\tuple z}$ if and only if for any $s, s' \in X$ with $s(\vec{x})=s'(\vec{x})$ there is a $s'' \in X$ such that $s''(\vec{x})=s(\vec{x})$, $s''(\vec{y})=s(\vec{y})$ and $s''(\vec{z})=s'(\vec{z})$.
\end{itemize}
Furthermore, we will write $\indepc{\tuple x}{\tuple y}$ as a shorthand for $\indep{\emptyset}{\tuple x}{\tuple y}$, and call it a \emph{pure independence atom}.

Let $\tuple x$ and $\tuple y$ be two tuples of variables of the same length. Then $\tuple x \subseteq \tuple y$
is an \emph{inclusion atom} with the semantic rule 
\begin{itemize}
\item $\M \models_X \tuple x \subseteq \tuple y$ if and only if for any $s \in X$ there is a $s'\in X$ such that $s(\vec{x})=s'(\vec{y})$.
\end{itemize}
\end{maa}
Note that in the definition of an inclusion atom $\tuple x \subseteq \tuple y$, the tuples $\tuple x$ and $\tuple y$ may both have repetitions. Also in the definition of a conditional independence atom $\indep{\tuple x}{\tuple y}{\tuple z}$, the tuples $\tuple x$, $\tuple y$ and $\tuple z$ are not necessarily pairwise disjoint. Thus any dependence atom $\dep(\vec{x},y)$ can be expressed as a conditional independence atom $\indep{\vec{x}}{y}{y}$. Also any independence atom $\indep{\vec{x}}{\vec{y}}{\vec{z}}$ can be expressed as a conjunction of dependendence atoms and an independence atom $\indep{\vec{x}}{\vec{y}^*}{\vec{z}^*}$ where $\vec{x}$, $\vec{y}^*$ and $\vec{z}^*$ are pairwise disjoint. For disjoint tuples $\tuple x$, $\tuple y$ and $\tuple z$, independence atom $\indep{\vec{x}}{\vec{y}}{\vec{z}}$ corresponds to the embedded multivalued dependency $\vec{x} \twoheadrightarrow \vec{y} |\vec{z}$. Hence the class of conditional independence atoms corresponds to the class of functional dependencies and embedded multivalued dependencies in database theory.

\begin{prop}[\cite{galhankon13}]
Let $\indep{\vec{x}}{\vec{y}}{\vec{z}}$ be a conditional independence atom where $\vec{x}$, $\vec{y}$ and $\vec{z}$ are tuples of variables. If $\vec{y}^*$ lists the variables in $\vec{y} - \vec{x}\cup\vec{z}$, $\vec{z}^*$ lists the variables in $\vec{z}-\vec{x}\cup\vec{y}$, and $\vec{u}$ lists the variables in $\vec{y} \cap \vec{z}-\vec{x}$, then
$$\M \models_X \indep{\vec{x}}{\vec{y}}{\vec{z}} \Leftrightarrow \M \models_X \indep{\vec{x}}{\vec{y}^*}{\vec{z}^*} \ja \bigwedge_{u\in \vec{u}} \dep(\vec{x},u).$$
\end{prop}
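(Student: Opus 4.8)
The plan is to prove the equivalence by simply unwinding the semantic rules for the conditional independence atom and the dependence atom, and checking the two implications pointwise on pairs of assignments $s,s' \in X$ with $s(\vec x) = s'(\vec x)$. The key combinatorial observation driving the argument is the following dichotomy: every variable occurring in $\vec y$ either occurs in $\vec x$, or occurs in $\vec y^*$, or occurs in $\vec u$, and these three cases are exhaustive and cover $\vec y$ because $\vec y \setminus \vec x$ partitions into $\vec y \cap \vec z \setminus \vec x$ (namely $\vec u$) and $\vec y \setminus (\vec x \cup \vec z)$ (namely $\vec y^*$); symmetrically for $\vec z$, with $\vec z^*$ and $\vec u$ in place of $\vec y^*$ and $\vec u$. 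The conjuncts $\dep(\vec x, u)$ for $u \in \vec u$ are exactly what is needed to recover the values of the shared variables of $\vec y$ and $\vec z$ inside a witnessing assignment.

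For the direction ``$\Rightarrow$'', assume $\M \models_X \indep{\vec x}{\vec y}{\vec z}$. To get the dependence atoms, fix $u \in \vec u$ and $s,s' \in X$ with $s(\vec x) = s'(\vec x)$; the witness $s'' \in X$ supplied by the independence atom satisfies $s''(u) = s(u)$ because $u$ occurs in $\vec y$ and $s''(\vec y) = s(\vec y)$, and $s''(u) = s'(u)$ because $u$ occurs in $\vec z$ and $s''(\vec z) = s'(\vec z)$, whence $s(u) = s'(u)$; so $\M \models_X \dep(\vec x, u)$. For the atom $\indep{\vec x}{\vec y^*}{\vec z^*}$, observe that $\vec y^*$ and $\vec z^*$ consist of variables occurring in $\vec y$ and $\vec z$ respectively, so the very same witness $s''$ for $\indep{\vec x}{\vec y}{\vec z}$ already satisfies $s''(\vec x) = s(\vec x)$, $s''(\vec y^*) = s(\vec y^*)$ and $s''(\vec z^*) = s'(\vec z^*)$.

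For the direction ``$\Leftarrow$'', assume the right-hand side holds and fix $s,s' \in X$ with $s(\vec x) = s'(\vec x)$. Let $s'' \in X$ be the witness provided by $\indep{\vec x}{\vec y^*}{\vec z^*}$, so $s''(\vec x) = s(\vec x)$, $s''(\vec y^*) = s(\vec y^*)$ and $s''(\vec z^*) = s'(\vec z^*)$; it remains only to check $s''(\vec y) = s(\vec y)$ and $s''(\vec z) = s'(\vec z)$. For a variable $v$ occurring in $\vec y$ we use the dichotomy: if $v$ occurs in $\vec x$ then $s''(v) = s(v)$ since $s''(\vec x) = s(\vec x)$; if $v$ occurs in $\vec y^*$ then $s''(v) = s(v)$ by the choice of $s''$; and if $v \in \vec u$ then $\M \models_X \dep(\vec x, v)$ together with $s''(\vec x) = s(\vec x)$ gives $s''(v) = s(v)$. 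Hence $s''(\vec y) = s(\vec y)$. The argument for $\vec z$ is symmetric, this time additionally invoking $s(\vec x) = s'(\vec x)$ to handle the variables of $\vec z$ lying in $\vec x$ or in $\vec u$ (so that agreement of $s''$ and $s$ on $\vec x$ translates into agreement of $s''$ and $s'$). Therefore $s''$ witnesses $\M \models_X \indep{\vec x}{\vec y}{\vec z}$.

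There is no substantial obstacle here; the only thing requiring care is the bookkeeping with variables that are repeated within a tuple or shared between $\vec x$, $\vec y$ and $\vec z$, together with making explicit that the partition of $\vec y \setminus \vec x$ (and of $\vec z \setminus \vec x$) is exhaustive, so that the three cases in the ``$\Leftarrow$'' direction really do account for every coordinate.
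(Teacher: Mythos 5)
Your argument is correct: the case analysis on each coordinate of $\vec y$ (respectively $\vec z$) into the three exhaustive classes $\vec x$, $\vec y^*$ (resp.\ $\vec z^*$), and $\vec u$ is exactly what is needed, and both directions go through, including the use of $s(\vec x)=s'(\vec x)$ to transfer agreement from $s$ to $s'$ on the $\vec x$- and $\vec u$-coordinates of $\vec z$. Note that this paper does not prove the proposition itself but quotes it from the cited reference, so there is no in-paper proof to compare against; your direct unwinding of the team-semantic clauses is the standard argument for this fact.
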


The extension of first-order logic by dependence atoms, conditional independence atoms and inclusion atoms is called \emph{dependence logic} ($\deplogic$), \emph{independence logic} ($\indlogic$) and \emph{inclusion logic} ($\inclogic$), respectively. The fragment of independence logic containing only pure independence atoms is called \emph{pure independence logic}, written $\indNRlogic$. For a collection of atoms $\mathcal C \subseteq \{=\!\!(\ldots), \bot_{\rm c}, \subseteq\}$, we will write $\FO(\mathcal C)$ (omitting the set parenthesis of $\mathcal{C}$) for first-order logic with these atoms.

We end this section with a list of properties of these logics.
\begin{prop}\label{PROP}
For $\mathcal C = \{=\!\!(\ldots), \bot_{\rm c}, \subseteq\}$, the following hold.
\begin{enumerate}
\item(Empty Team Property)\label{thm:etp}
For all models $\M$ and formulas $\phi \in \FO(\mathcal C)$ %over the signature of $\M$
$$\M \models_\emptyset \phi.$$

\item(Locality \cite{galliani12})\label{thm:loc}
If $\phi \in \FO(\mathcal C)$ is such that $\Fr (\phi)\sub V$, then for all models $\M$ and teams $X$, 
$$\M \models_X \phi \Leftrightarrow \M \models_{X\upharpoonright V} \phi.$$
\item \label{thm:inc_ind}\cite{galliani12}
An inclusion atom $\tuple x \subseteq \tuple y$ is logically equivalent to the pure independence logic formula
\[
\forall v_1 v_2 \tuple z ((\tuple z \not = \tuple x \wedge \tuple z \not = \tuple x) \vee (v_1 \not = v_2 \wedge \tuple z \not = \tuple y) \vee ((v_1 = v_2 \vee \tuple z = \tuple y) \wedge \indepc{\tuple z}{v_1 v_2}))
\]
where $v_1$, $v_2$ and $\tuple z$ are new variables.
\item\cite{vaananen13} 
Any independence logic formula is logically equivalent to some pure independence logic formula.
\item \cite{vaananen07,gradel10} Any dependence (or independence) logic sentence $\phi$ is logically equivalent to some existential second-order sentence $\phi^*$, and vice versa.
\item\label{thm:ILGFP}\cite{gallhella13} Any inclusion logic sentence $\phi$ is logically equivalent to some positive greatest fixpoint logic sentence $\phi^*$, and vice versa.
\end{enumerate}
\end{prop}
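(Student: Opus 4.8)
The plan is to treat the six items in two groups: items (1)--(3) admit short, essentially self-contained arguments that I would carry out here, while items (4)--(6) are deep results quoted from \cite{vaananen13}, \cite{vaananen07,gradel10} and \cite{gallhella13}, which the paper uses only as black boxes and which I would not reprove. For the Empty Team Property (1) and Locality (2) I would run a single structural induction on $\phi \in \FO(\mathcal C)$. For (1) every base case --- first-order (negated) atoms, $\dep(\ldots)$, $\bot_{\rm c}$, and $\subseteq$ atoms --- holds vacuously, since each semantic rule is universally quantified over the (here empty) team; and the inductive steps preserve emptiness: for $\psi\vee\theta$ take $Y=Z=\emptyset$, for $\psi\wedge\theta$ apply the hypothesis twice, and for $\exists v\psi$, $\forall v\psi$ note $\emptyset[F/v]=\emptyset[M/v]=\emptyset$. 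For (2), with induction hypothesis ``$\Fr(\phi)\subseteq V$ implies $\M\models_X\phi \Leftrightarrow \M\models_{X\upharpoonright V}\phi$'', the atom cases hold because the semantic rules only inspect values of variables in $\Fr(\phi)\subseteq V$, and the propositional cases are immediate; the quantifier cases need a little care, since $v$ becomes free in the immediate subformula, so one checks that a witnessing function $F$ on $X$ projects to $s'\mapsto \bigcup\{F(s):s\upharpoonright V=s'\}$ on $X\upharpoonright V$ and lifts back, and that $X[M/v]\upharpoonright(V\cup\{v\})=(X\upharpoonright V)[M/v]$.

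For (3) the plan is to verify directly that the displayed formula $\Phi(\tuple x,\tuple y)$ --- whose matrix, after the block $\forall v_1 v_2\tuple z$, is a disjunction $\theta_1\vee\theta_2\vee\theta_3$ --- is logically equivalent to $\tuple x\subseteq\tuple y$. Passing the universal block replaces $X$ by $X[M/v_1v_2\tuple z]$, and the point is that this blown-up team can be partitioned so that $\theta_1,\theta_2,\theta_3$ hold on the respective parts precisely when, on the sub-team where $v_1=v_2$ or $\tuple z=\tuple y$, the pure independence atom $\indepc{\tuple z}{v_1v_2}$ holds, which in turn forces every tuple that occurs as an $\tuple x$-value in $X$ to occur also as a $\tuple y$-value in $X$. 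For $\tuple x\subseteq\tuple y\Rightarrow\Phi$ I would exhibit the partition explicitly; for the converse I would extract from $\indepc{\tuple z}{v_1v_2}$, for each $\tuple x$-value, an assignment of $X$ realising it as a $\tuple y$-value. Getting this bookkeeping exactly right --- the correct decomposition of $X[M/v_1v_2\tuple z]$ and the precise reading of the three disjuncts --- is the one genuinely fiddly step and the main obstacle among the items I would actually prove; it is carried out in full in \cite{galliani12}.

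Finally, for (4) I would recall that every conditional independence atom is expressible in pure independence logic --- generalising the device used in (3), together with the equivalence $\dep(x_1,\ldots,x_n)\equiv x_n\bot_{x_1\ldots x_{n-1}}x_n$ noted earlier --- so that independence logic collapses to $\indNRlogic$; for (5), the standard translations, in both directions, between team-semantic sentences of dependence/independence logic and existential second-order sentences; and for (6), the characterisation of inclusion logic sentences by positive greatest fixpoint logic. Since the present paper invokes (4)--(6) only as named facts, quoting these references suffices.
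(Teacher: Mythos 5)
Your plan is correct and matches the paper's treatment: the paper states this proposition without proof, as a list of known facts cited from the literature, and your deferral of items (4)--(6) to \cite{vaananen13}, \cite{vaananen07,gradel10}, \cite{gallhella13} together with the standard induction for (1)--(2) and the direct verification of (3) from \cite{galliani12} is exactly the intended reading. One small remark: in the Locality induction the disjunction case is not quite ``immediate'' but needs the same lifting of a split of $X\upharpoonright V$ back to a split of $X$ that you describe for $\exists v$ (and it is precisely the Lax semantics that makes both of these steps go through), so fold that case into your quantifier-style argument.
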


%Hence independence logic is strictly more expressive than inclusion logic. However, with strict team semantics these results do not necessarily hold.  In general, we have the following. (POIS)
%\begin{prop}\label{laxstrict}
%Let $\phi$ be a formula in team semantics setting. If $\M \models^S_X \phi$, then $\M \models_X \phi$.
%\end{prop}(POIS)
%The other direction holds for logics $\Lo$ with the so-called downward closure property:
%\begin{itemize}
%\item $\Lo$ is downwards closed iff for all $\phi \in \Lo$: if $\M \models_X \phi$ and $Y \sub X$, then $\M \models_Y \phi$.
%\end{itemize}
%Inclusion logic and independence logic both violate downward closure already at the atomic level, and also the other direction of Proposition \ref{laxstrict} does not hold for them \cite{galliani12}. Moreover, in strict team semantics setting these logics violate the locality principle (see Theorem \ref{thm:loc}) \cite{galliani12}. Still, the expressive power of inclusion logic with strict team semantics increases to the level of existential second-order logic.
%(POIS)
%\begin{thm}[\cite{galhankon13}]
%Any inclusion logic sentence $\phi$ with strict team semantics is logically equivalent to some existential second-order sentence $\phi^*$, and vice versa.
%\end{thm}

\section{Deduction system}

In this section we present a sound and complete axiomatization for the implication problem of inclusion and independence atoms. The implication problem is given by a finite set $\Si \cup \{\phi\}$ consisting of conditional independence and inclusion atoms, and the question is to decide whether
$\Si \models \phi.$
%The framework is having inclusion and independence atoms and conjunctions in the language. 
%We will show that this axiomatization is sound and complete with respect to the implication problem for these atoms.
\begin{maa}\label{axioms}
In addition to the usual introduction and elimination rules for conjunction, we adopt the following rules for conditional independence and inclusion atoms.
\begin{enumerate}
\item Reflexivity:   $$\vec{x} \subseteq \vec{x}.$$
\item Projection and Permutation: $$\textrm{if }x_1 \ldots x_n \subseteq y_1 \ldots y_n  \textrm{, then } x_{i_1} \ldots x_{i_k} \subseteq y_{i_1} \ldots y_{i_k},$$ for each sequence $i_1 , \ldots ,i_k$ of integers from $\{1, \ldots ,n\}$.
\item Transitivity: $$\textrm{if }\vec{x} \subseteq \vec{y}\ja \vec{y} \subseteq \vec{z} \textrm{, then }\vec{x} \subseteq \vec{y}.$$
\item Identity Rule: \[\textrm{if }ab\sub cc \ja \phi \textrm{, then } \phi',\]
where $\phi'$ is obtained from $\phi$ by replacing any number of occurrences of $a$ by $b$.
\item Inclusion Introduction: $$\textrm{if }\vec{a} \subseteq \vec{b} \textrm{, then } \vec{a}x \subseteq \vec{b} c,$$ where $x$ is a \emph{new} variable.
\item Start Axiom: $$\vec{a} \vec{c} \subseteq \vec{a} \vec{x} \ja \indep{\vec{a}}{\vec{b}}{\vec{x}} \ja \vec{a} \vec{x} \subseteq \vec{a} \vec{c}$$ where $\vec{x}$ is a sequence of pairwise distinct \emph{new} variables.
\item Chase Rule: $$\textrm{if }\indep{\vec{x}}{\vec{y}}{\vec{z}} \ja \vec{a}\vec{b} \subseteq \vec{x}\vec{y} \ja \vec{a}\vec{c} \subseteq \vec{x}\vec{z}  \textrm{, then } \vec{a}\vec{b}\vec{c} \subseteq \vec{x}\vec{y}\vec{z}.$$
\item Final Rule: $$\textrm{if }\vec{a} \vec{c} \subseteq \vec{a} \vec{x}\ja \indep{\vec{a}}{\vec{b}}{\vec{x}} \ja \vec{a}\vec{b} \vec{x} \subseteq \vec{a}\vec{b} \vec{c}  \textrm{, then } \indep{\vec{a}}{\vec{b}}{\vec{c}}.$$
\end{enumerate}
\end{maa}
In an application of Inclusion Introduction, the variable $x$ is called the new variable of the deduction step. Similarly, in an application of Start Axiom, the variables of $\vec{x}$ are called the new variables of the deduction step.
A deduction from $\Si$ is a sequence of formulas $(\phi_1, \ldots ,\phi_n)$ such that:
\begin{enumerate}
\item Each $\phi_i$ is either an element of $\Si$, an instance of Reflexivity or Start Axiom, or follows from one or more formulas of $\Si \cup \{\phi_1, \ldots ,\phi_{i-1}\}$ by one of the rules presented above.
\item If $\phi_i$ is an instance of Start Axiom (or follows from $\Si \cup \{\phi_1, \ldots ,\phi_{i-1}\}$ by Inclusion Introduction), then the new variables of $\vec{x}$ (or the new variable $x$) must not appear in $\Si \cup \{\phi_1,\ldots ,\phi_{i-1}\}$.
\end{enumerate}
We say that $\phi$ is provable from $\Si$, written $\Si \vdash \phi$, if there is a deduction $(\phi_1, \ldots ,\phi_n)$ from $\Si$ with $\phi=\phi_n$ and such that no variables in $\phi$ are new in $\phi_1, \ldots ,\phi_n$. %If $\phi$ is provable from the empty set, then we will write $\vdash \phi$.

\section{Soundness}
First we prove the soundness of these axioms. Identity Rule and Start Axiom are sound if we interpret all the new variables as existentially quantified.

\begin{lem}\label{fakesound}
Let $(\phi_1, \ldots ,\phi_n)$ be a deduction from $\Si$, and let $\vec{y}$ list %(without repetitions) 
all the new variables of the deduction steps. Let $\M$ and $X$ be such that $\M \models_X \Si$ and $\Var (\Si_n) \setminus\vec{y} \sub \Dom(X)$ where $\Si_n := \Si \cup\{\phi_1, \ldots ,\phi_{n}\}$. Then
%$$X \models \on \vec{y} \bigwedge_{1 \leq i \leq n} \phi_i.$$
 $$\M \models_X \on \vec{y} \bigwedge \Si_n.$$
\end{lem}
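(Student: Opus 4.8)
The plan is to prove this by induction on the length $n$ of the deduction $(\phi_1,\ldots,\phi_n)$, establishing at each stage that the team $X$ can be extended (using the existential quantifiers over the new variables introduced so far) so that all formulas $\phi_1,\ldots,\phi_n$ together with $\Si$ are simultaneously satisfied. The base case $n=0$ is just the hypothesis $\M\models_X\Si$, with an empty block of new variables. For the inductive step, I would assume $\M\models_X\on\vec{y}'\bigwedge\Si_{n-1}$, where $\vec{y}'$ lists the new variables appearing in steps $1,\ldots,n-1$, and by Lemma \ref{ketjulemma} fix a function $F':X\to\Po(M^{|\vec{y}'|})\setminus\{\emptyset\}$ with $\M\models_{X'}\bigwedge\Si_{n-1}$ where $X'=X[F'/\vec{y}']$. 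The goal is to find an extension $X''$ of $X'$ (possibly enlarging it along the genuinely new variables of step $n$, if any) witnessing $\on\vec{y}\bigwedge\Si_n$, i.e. $\M\models_{X''}\bigwedge\Si_n$; then peeling the quantifiers back off via Lemma \ref{ketjulemma} gives the claim. By locality (Proposition \ref{PROP}(\ref{thm:loc})) and the empty team property I can be relaxed about domains matching exactly.

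The heart of the argument is a case analysis on how $\phi_n$ arises. If $\phi_n\in\Si$ or is an instance of Reflexivity, it is already satisfied by $X'$ (Reflexivity is valid in every team, $\Si$ is satisfied since $X'$ extends a restriction of itself on which $\Si$ holds — here locality is used to drop the new coordinates), so $X''=X'$ works and no new variable is added. If $\phi_n$ follows by Projection and Permutation, Transitivity, the Identity Rule, Inclusion Introduction with no genuinely free new variable, the Chase Rule, or the Final Rule, then $\phi_n$ is a logical consequence (in the $\Rightarrow$ sense, or after adding one existentially-quantified witness variable) of formulas already in $\Si_{n-1}$ that hold in $X'$; so again $\M\models_{X'}\phi_n$ and we take $X''=X'$. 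The two rules that genuinely introduce new variables are Inclusion Introduction (one new variable $x$) and Start Axiom (a tuple $\vec{x}$ of new variables); for these I must exhibit the extension. For Inclusion Introduction applied to $\vec{a}\subseteq\vec{b}$ to get $\vec{a}x\subseteq\vec{b}c$: since $\M\models_{X'}\vec{a}\subseteq\vec{b}$, define $F:X'\to\Po(M)\setminus\{\emptyset\}$ by letting $F(s)$ collect, for each $s'\in X'$ with $s(\vec{a})=s'(\vec{b})$, the value $s'(c)$ — this set is nonempty by the inclusion atom — and set $X''=X'[F/x]$; one checks $\vec{a}x\subseteq\vec{b}c$ holds in $X''$ and, crucially, that all formulas of $\Si_{n-1}$ still hold in $X''$, which follows because $x$ is new, hence by locality their satisfaction is unaffected by adding the $x$-coordinate. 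For Start Axiom producing $\vec{a}\vec{c}\subseteq\vec{a}\vec{x}\wedge\indep{\vec{a}}{\vec{b}}{\vec{x}}\wedge\vec{a}\vec{x}\subseteq\vec{a}\vec{c}$: here $\vec{x}$ is a fresh copy of $\vec{c}$, so the natural move is $X''=X'[G/\vec{x}]$ where $G(s)$ is chosen to make $\vec{x}$ behave as a ``duplicate'' of $\vec{c}$ but with enough spread that the independence atom $\indep{\vec{a}}{\vec{b}}{\vec{x}}$ holds — concretely, let $\vec{x}$ at $s$ range over $\{s'(\vec{c}) : s'\in X',\, s'(\vec{a})=s(\vec{a})\}$, which yields both inclusion atoms and makes $\vec{x}$, relative to fixed $\vec{a}$, range over a full ``rectangle'' of $\vec{c}$-values so that the independence with $\vec{b}$ is automatic.

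The main obstacle I expect is verifying in the Start Axiom case that the chosen extension $X''$ simultaneously satisfies the independence atom $\indep{\vec{a}}{\vec{b}}{\vec{x}}$ \emph{and} both inclusion atoms \emph{and} preserves all of $\Si_{n-1}$ — the independence atom's semantic rule demands, for every pair $s,s'$ agreeing on $\vec{a}$, a third assignment agreeing with $s$ on $\vec{a}\vec{b}$ and with $s'$ on $\vec{x}$, so the function $G$ must produce, at each $s$, \emph{all} the $\vec{c}$-values occurring over the $\vec{a}$-block of $s$, not just one witness; I need to check this larger team still validates the inclusion atoms and does not break any independence or inclusion atom of $\Si_{n-1}$ (again leaning on locality since $\vec{x}$ is new, so those atoms never mention $\vec{x}$). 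A secondary subtlety is bookkeeping: the statement quantifies over \emph{all} new variables $\vec{y}$ of the whole deduction at once, so in the inductive step I must make sure the extension along step $n$'s new variables is compatible with the already-chosen witnesses for earlier new variables — this is handled by the freshness side-condition in the definition of a deduction, which guarantees step $n$'s new variables do not occur in $\Si_{n-1}$, so the two extension operations commute and can be composed. The rest of the case analysis is routine once one observes that each non-introducing rule is sound as an ordinary team-semantics implication (which is exactly what soundness of the ``core'' inclusion-dependency axioms, e.g. from \cite{casanova83}, plus the semantic rules for the atoms, gives).
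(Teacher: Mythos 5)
Your overall strategy is the paper's own: induction on the length of the deduction, Lemma \ref{ketjulemma} to fix a witnessing extension $X'$ for the earlier new variables, locality plus the freshness side-condition to see that extending along the new variables of the last step cannot disturb $\Si_{n-1}$, and then a case analysis on the last rule. Your witness functions for the two variable-introducing rules are exactly the ones in the paper: for Inclusion Introduction the values $s'(c)$ of witnesses $s'$ with $s'(\vec{b})=s(\vec{a})$ (the paper takes a single such value, you take all of them, which works equally well), and for Start Axiom $G(s)=\{s'(\vec{c}) \mid s'\in X',\ s'(\vec{a})=s(\vec{a})\}$, together with the correct observation that the independence atom forces you to take the whole $\vec{a}$-block rather than one witness.

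The one real weakness is how you dispose of the Chase Rule and the Final Rule. You lump them with Projection/Permutation, Transitivity and Identity as ``routine'' and justify them by appeal to the soundness of the classical inclusion-dependency axioms (\cite{casanova83}) plus the semantic clauses. That attribution is wrong: both rules mix an independence atom with inclusion atoms, so they are not instances of any classical inclusion-dependency axiom, and their soundness as team-semantic implications is precisely what has to be checked here. The check does go through — e.g.\ for the Chase Rule, given $s\in X'$ one uses $\vec{a}\vec{b}\sub\vec{x}\vec{y}$ and $\vec{a}\vec{c}\sub\vec{x}\vec{z}$ to get $s',s''\in X'$ agreeing on $\vec{x}$ with values $s(\vec{a})$, and then $\indep{\vec{x}}{\vec{y}}{\vec{z}}$ supplies $s_0$ with $s_0(\vec{x}\vec{y}\vec{z})=s(\vec{a}\vec{b}\vec{c})$; the Final Rule needs a similar three-step chaining through $\vec{a}\vec{c}\sub\vec{a}\vec{x}$, the independence atom, and $\vec{a}\vec{b}\vec{x}\sub\vec{a}\vec{b}\vec{c}$ — but these two verifications are the remaining substantive content of the lemma (the paper writes them out in full), so your proof is incomplete until you supply them rather than cite them away. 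A cosmetic point: Inclusion Introduction always introduces a new variable by the definition of a deduction, so your case ``Inclusion Introduction with no genuinely free new variable'' is empty.
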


\begin{proof}
We show the claim by induction on $n$. So assume that the claim holds for any deduction of length $n$. We prove that the claim holds for deductions of lenght $n+1$ also. Let $(\phi_1, \ldots ,\phi_{n+1})$ be a deduction from $\Si$, and let $\vec{y}$ and $\vec{z}$ list all the new variables of the deduction steps $\phi_1, \ldots ,\phi_n$ and $\phi_{n+1}$, respectively. Note that $\phi_{n+1}$ might not contain any new variables in which case $\vec{z}$ is empty. Assume that $\M \models_X \Si$ for some $\M$ and $X$, where $\Var (\Si_{n+1}) \setminus\vec{y}\vec{z} \sub \Dom(X)$. By Proposition \ref{PROP}.\ref{thm:loc} we may assume that $\Var (\Si_{n+1}) \setminus\vec{y}\vec{z} = \Dom(X)$.
 We need to show that
$$\M \models_X \on \vec{y}\on \vec{z} \bigwedge\Sigma_{n+1}.$$
 By the induction assumption,
$$\M \models_X \on \vec{y} \bigwedge\Sigma_n$$
when by Lemma \ref{ketjulemma} there is a function $F:X \rightarrow \Po(M^{|\vec{y}|})\setminus\{\emptyset\}$ such that 
\begin{equation}\label{heipähei}
\M \models_{X'} \bigwedge \Sigma_n
\end{equation}
where $X':= X[F/\vec{y}]$. It suffices to show that
$$\M \models_{X'} \on \vec{z} \bigwedge \Sigma_{n+1}.$$
If $\phi_{n+1}$ is an instance of Start Axiom, or follows from $\Si_n$ by Inclusion Introduction, then by Lemma \ref{ketjulemma} it suffices to find a $G: X' \rightarrow \Po(M^{|\vec{z}|})\setminus\{\emptyset\}$, such that $\M \models_{X'[G/\vec{z}]} \phi_{n+1}$. For this note that no variable of $\vec{z}$ is in $\Var (\Si_n)$, and hence by Proposition \ref{PROP}.\ref{thm:loc} $\M \models_{X'[G/\vec{z}]} \Si_n$ follows from \eqref{heipähei}. Otherwise, if $\vec{z}$ is empty, then it suffices to show that $\M \models_{X'} \phi_{n+1}$. 

The cases where $\phi_{n+1}$ is an instance of Reflexivity, or follows from $\Si_n$ by a conjunction rule,  Projection and Permutation, Transitivity or Identity are straightforward. We prove the claim in the cases where one of the last four rules is applied.
\begin{itemize}
\item Inclusion Introduction: Then $\phi_{n+1}$ is of the form $\vec{a}x \subseteq \vec{b} c$ where $\vec{a} \subseteq \vec{b}$ is in $\Si_n$. Let $s \in X'$. Since $\M \models_{X'} \vec{a} \subseteq \vec{b}$ there is a $s'\in X'$ such that $s(\vec{a})=s'(\vec{b})$. We let $G(s)=\{s'(c)\}$. Since $x \not\in \Dom(X')$ we conclude that $ \M \models_{X'[G/x]} \vec{a}x \subseteq \vec{b} c$.

\item Start Axiom: Then $\phi_{n+1}$ is of the form $\vec{a} \vec{c} \subseteq \vec{a} \vec{x} \ja \indep{\vec{a}}{\vec{b}}{\vec{x}} \ja \vec{a} \vec{x} \subseteq \vec{a} \vec{c}$. We define $G: X' \rightarrow \Po(M^{|\vec{x}|})\setminus\{\emptyset\}$ as follows:
$$G(s) = \{s'(\vec{c}) \mid s'\in X', s'(\vec{a})=s(\vec{a})\}.$$
Again, since $\vec{x}$ does not list any of the variables in $\Dom(X')$, it is straightforward to show that $$\M \models_{X'[G/\vec{x}]}  \vec{a} \vec{c} \subseteq \vec{a} \vec{x} \ja \indep{\vec{a}}{\vec{b}}{\vec{x}} \ja \vec{a} \vec{x} \subseteq \vec{a} \vec{c}.$$
\item Chase Rule: Then $\phi_{n+1}$ is of the form $\vec{a}\vec{b}\vec{c} \subseteq \vec{x}\vec{y}\vec{z}$ where 
$$\indep{\vec{x}}{\vec{y}}{\vec{z}}\ja\vec{a}\vec{b}\subseteq \vec{x}\vec{y}\ja\vec{a}\vec{c}\subseteq \vec{x}\vec{z} \in \Si_n.$$ Let $s \in X'$. Since $\M \models_{X'}  \vec{a}\vec{b} \subseteq \vec{x}\vec{y} \ja \vec{a}\vec{c} \subseteq \vec{x}\vec{z}$ there are $s',s''\in X'$ such that $s'(\vec{x}\vec{y})=s(\vec{a}\vec{b})$ and $s''(\vec{x}\vec{z} )=s(\vec{a}\vec{c})$. Since $s'(\vec{x})=s''(\vec{x})$ and $\M \models_{X'} \indep{\vec{x}}{\vec{y}}{\vec{z}}$, there is a $s_0\in X'$ such that $s_0(\vec{x}\vec{y}\vec{z})=s(\vec{a}\vec{b}\vec{c})$ which shows the claim.

\item Final Rule: Then $\phi_{n+1}$ is of the form $\indep{\vec{a}}{\vec{b}}{\vec{c}}$ where $$\vec{a} \vec{c} \subseteq \vec{a} \vec{x}\ja\indep{\vec{a}}{\vec{b}}{\vec{x}}\ja\vec{a}\vec{b} \vec{x} \subseteq \vec{a}\vec{b} \vec{c}  \in \Si_n.$$
Let $s,s' \in X'$ be such that $s(\vec{a})=s'(\vec{a})$. Since $\M \models_{X'} \vec{a} \vec{c} \subseteq \vec{a} \vec{x}$ there is a $s_0 \in X'$ such that $s'(\vec{a}\vec{c})=s_0(\vec{a}\vec{x})$. Since $\M\models_{X'} \indep{\vec{a}}{\vec{b}}{\vec{x}}$ and $s(\vec{a})=s_0(\vec{a})$ there is a $s_1\in X'$ such that $s_1(\vec{a}\vec{b}\vec{x})=s(\vec{a}\vec{b})s_0(\vec{x})$. And since $\M \models_{X'} \vec{a}\vec{b} \vec{x} \subseteq \vec{a}\vec{b} \vec{c}$ there is a $s''\in X'$ such that $s''(\vec{a}\vec{b}\vec{c})=s_1(\vec{a}\vec{b}\vec{x})$. Then $s''(\vec{a}\vec{b}\vec{c})=s(\vec{a}\vec{b})s'(\vec{c})$ which shows the claim and concludes the proof.

\end{itemize}
\end{proof}
This gives us the following soundness theorem.
\begin{thm}\label{sound}
Let $\Si \cup \{\phi\}$ be a finite set of conditional independence and inclusion atoms. Then $\Si \models \phi$ if $\Si \vdash \phi$.
\end{thm}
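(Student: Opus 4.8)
\textbf{Proof plan for Theorem \ref{sound} (Soundness).}
The plan is to derive the soundness theorem as a fairly direct corollary of Lemma \ref{fakesound}, which already does almost all of the work. Suppose $\Si \vdash \phi$, witnessed by a deduction $(\phi_1, \ldots, \phi_n)$ from $\Si$ with $\phi = \phi_n$ and such that no variable occurring in $\phi$ is new in $\phi_1, \ldots, \phi_n$. Let $\vec{y}$ list all the new variables of the deduction steps. To prove $\Si \models \phi$, I take an arbitrary model $\M$ and team $X$ with $\Fr(\Si) \cup \Fr(\phi) \sub \Dom(X)$ and $\M \models_X \Si$; I must show $\M \models_X \phi$.

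First I would arrange, using Proposition \ref{PROP}.\ref{thm:loc} (Locality), that the domain of $X$ is exactly $\Var(\Si) \cup \Var(\phi)$ minus the new variables $\vec y$; restricting the team to these variables preserves $\M \models_X \Si$ and, once established, can be transferred back to the original team. Then I apply Lemma \ref{fakesound} to the full deduction $(\phi_1, \ldots, \phi_n)$: since $\M \models_X \Si$ and $\Var(\Si_n) \setminus \vec{y} \sub \Dom(X)$, we obtain $\M \models_X \on \vec{y} \bigwedge \Si_n$. By Lemma \ref{ketjulemma} there is a function $F : X \to \Po(M^{|\vec{y}|}) \setminus \{\emptyset\}$ with $\M \models_{X[F/\vec{y}]} \bigwedge \Si_n$, and in particular $\M \models_{X[F/\vec{y}]} \phi$, since $\phi = \phi_n$ is one of the conjuncts of $\bigwedge \Si_n$.

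It remains to pass from $X[F/\vec{y}]$ back to $X$. Here is the one point requiring a small argument: by hypothesis no variable of $\phi$ is among the new variables $\vec{y}$, so $\Fr(\phi) \sub \Dom(X) \sub \Dom(X[F/\vec{y}])$ and $\Fr(\phi)$ is disjoint from $\vec y$. Hence $(X[F/\vec{y}]) \upharpoonright \Fr(\phi) = X \upharpoonright \Fr(\phi)$, because supplementing each assignment with values for the variables in $\vec y$ and then projecting onto $\Fr(\phi)$ returns the original assignment restricted to $\Fr(\phi)$. Applying Locality twice — once to $X[F/\vec{y}]$ and once to $X$ — we get $\M \models_{X \upharpoonright \Fr(\phi)} \phi$ and therefore $\M \models_X \phi$, as desired. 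Unwinding the initial Locality reduction of $\Dom(X)$ then yields the claim for the originally given team.

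I expect no serious obstacle: Lemma \ref{fakesound} is the substantive ingredient, and the only subtlety in the present argument is the careful bookkeeping with Locality and with the side condition that no variable of $\phi$ be new in the deduction — it is precisely this condition that makes the projection $(X[F/\vec{y}]) \upharpoonright \Fr(\phi) = X \upharpoonright \Fr(\phi)$ valid and hence lets us discharge the existential quantifiers over $\vec y$ without affecting $\phi$.
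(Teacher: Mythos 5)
Your overall strategy is exactly the paper's: feed the deduction into Lemma \ref{fakesound}, unpack the existential quantifiers via Lemma \ref{ketjulemma}, and use Locality together with the side condition that no variable of $\phi$ is new to pass from $X[F/\vec{y}]$ back to $X$. That last step of yours, $(X[F/\vec{y}])\upharpoonright \Fr(\phi) = X\upharpoonright\Fr(\phi)$, is fine. However, there is one step that can genuinely fail: after restricting $X$ so that $\Dom(X) = \Var(\Si\cup\{\phi\})$ (the new variables $\vec y$ never occur there anyway), you assert that $\Var(\Si_n)\setminus\vec{y} \sub \Dom(X)$, which is what Lemma \ref{fakesound} requires. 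This inclusion does not hold in general. The notion of ``new variable'' is tied only to Inclusion Introduction and Start Axiom steps, so a deduction may legitimately contain intermediate formulas mentioning variables that are neither new nor occur in $\Si\cup\{\phi\}$ — for instance a Reflexivity instance $\vec{u}\sub\vec{u}$ for arbitrary $\vec{u}$, or a Start Axiom instance whose $\vec a,\vec b,\vec c$ involve variables outside $\Si\cup\{\phi\}$ (only its $\vec x$ counts as new). For such a deduction the hypothesis of Lemma \ref{fakesound} is simply not met by your team, so the application is not licensed as written.

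The repair is small but needed, and it is exactly what the paper does: let $\vec z$ list the variables of $\Var(\Si_n)\setminus\vec y$ that are not in $\Dom(X)$, extend the team to $X' := X[\vec 0/\vec z]$ with dummy values (Locality gives $\M\models_{X'}\Si$), apply Lemma \ref{fakesound} to $X'$ to get $\M\models_{X'[F/\vec y]}\phi$, and then, since no variable of $\vec y$ or $\vec z$ occurs in $\phi$, conclude $\M\models_X\phi$ by Locality. Alternatively you could first argue that one may assume without loss of generality that the deduction uses only variables from $\Var(\Si\cup\{\phi\})\cup\vec y$, but that normalization claim would itself require a (renaming) argument which your proposal does not supply.
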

\begin{proof}
Assume that $\Si \vdash \phi$. Then there is a deduction $(\phi_1, \ldots ,\phi_n)$ from $\Si$ such that $\phi = \phi_n$ and no variables in $\phi$ are new in $\phi_1, \ldots ,\phi_n$. Let $\M$ and $X$ be such that $\Var(\Si \cup \{\phi\}) \sub \Dom(X)$ and $\M \models_X \Si $. We need to show that $\M \models_X \phi$. Let $\vec{y}$ list all the new variables in $\phi_1, \ldots ,\phi_n$, and let $\vec{z}$ list all the variables in $\Var(\Si_n)\setminus \vec{y}$ which are not in $\Dom (X)$. We first let $X':=X[\vec{0}/\vec{z}]$ for some dummy sequence $\vec{0}$ when by Theorem \ref{PROP}.\ref{thm:loc}, $\M \models_{X'} \Si$. Then by Theorem \ref{fakesound}, $\M \models_{X'} \on \vec{y} \bigwedge \Si_n$ implying there exists a $F: X' \rightarrow \Po(M^{|\vec{y}|})\setminus\{\emptyset\}$ such that $\M \models_{X''} \phi$, for $X'':=X'[F/\vec{y}]$. Since $X''= X[\vec{0}/\vec{z}][F/\vec{y}]$ and no variables of $\vec{y}$ or $\vec{z}$ appear in $\phi$, we conclude by Theorem \ref{PROP}.\ref{thm:loc} that $\M \models_X \phi$.

\end{proof}
\section{Completeness}
In this section we will prove that the set of axioms and rules presented in Definition \ref{axioms} is complete with respect to the implication problem for conditional independence and inclusion atoms. For this purpose we introduce a graph characterization for the implication problem in subsection \ref{graphchar}. This characterization is based on the classical characterization of the implication problem for various database dependencies using the chase procedure \cite{Maier:1979:TID:320107.320115}. The completeness proof is presented in subsection \ref{comp.proof}.

\subsection{Graph characterization}\label{graphchar}
We will consider graphs consisting of vertices and edges labeled by (possibly multiple) pairs of variables. The informal meaning is that a vertice will correspond to an assignment of a team, and an edge between $s$ and $s'$, labeled by $uw$, will express that $s(u)=s'(w)$. The graphical representation of the chase procedure is adapted from \cite{naumovre}.
\begin{maa}\label{verkko}
Let $G=(V,E)$ be a graph where $E$ consists of non-directed labeled edges $(u,w)_{ab}$ where $ab$ is a pair of variables, and for every pair $(u,w)$ of vertices there can be several $ab$ such that $(u,w)_{ab} \in E$. Then we say that $u$ and $w$ are $ab$-connected, written $u \sim_{ab} w$, if $u=w$ and $a=b$, or if there are vertices $v_0, \ldots ,v_n$ and variables $x_0, \ldots ,x_{n}$ such that $$(u,v_0)_{ax_0},(v_0,v_1)_{x_0x_1}, \ldots ,(v_{n-1},v_n)_{x_{n-1}x_n},(v_n,w)_{x_nb} \in E.$$
\end{maa}
Next we define a graph $G_{\Si ,\phi}$ in the style of Definition \ref{verkko} for a set $\Si \cup \{\phi\}$ of conditional independence and inclusion atoms.
\begin{maa}
Let $\Si\cup\{\phi\}$ be a finite set of conditional independence and inclusion atoms. We let $G_{\Si ,\phi}:=(\bigcup_{n\in\N} V_n,\bigcup_{n\in\N} E_n)$ where $G_n=(V_n,E_n)$ is defined as follows:
\begin{itemize}
\item If $\phi$ is $\indep{\vec{a}}{\vec{b}}{\vec{c}}$, then $V_0:=\{v^+,v^-\}$ and $E_0:=\{(v^+,v^-)_{aa} \mid a \in \vec{a}\}$. If $\phi$ is $\vec{a} \subseteq \vec{b}$, then $V_0:= \{v\}$ and $E_0:= \emptyset$.
\item Assume that $G_n$ is defined. Then for every $v \in V_n$ and $x_1 \ldots x_k \subseteq y_1 \ldots y_k \in \Si$ we introduce a new vertex $v_{\rm new}$ and new edges $(v,v_{\rm new})_{x_iy_i}$, for $1 \leq i \leq k$. Also for every $u,w \in V_n$, $u\neq w$, and $\indep{\vec{x}}{\vec{y}}{\vec{z}} \in \Si$ where $u \sim_{xx} w$, for $x \in \vec{x}$, we introduce a new vertex $v_{\rm new}$ and new edges $(u,v_{\rm new})_{yy}$, $(w,v_{\rm new})_{zz}$, for $y \in \vec{x}\vec{y}$ and $z \in \vec{x}\vec{z}$. We let $V_{n+1}$ and $E_{n+1}$ be obtained by adding these new vertices and edges to the sets $V_n$ and $E_n$.
\end{itemize}
Note that $G_{\Si ,\phi}= G_0$ if $\Si = \emptyset$.
\end{maa}
This gives us a characterization of the following form. Instead of writing $\M \models_X \phi$ we will now write $X \models \phi$, since the satisfaction of an atom depends only on the team $X$.
\begin{thm}\label{char}
Let $\Si\cup\{\phi\}$ be a finite set of conditional independence and inclusion atoms. 
\begin{enumerate}
\item If $\phi$ is $a_1 \ldots a_k \subseteq b_1 \ldots b_k$, then $\Si \models \phi \Leftrightarrow \on w \in V_{\Si ,\phi}(v \sim_{a_ib_i}w \textrm{ for all }1 \leq i \leq k$).
\item If $\phi$ is $\indep{\vec{a}}{\vec{b}}{\vec{c}}$, then $\Si \models \phi \Leftrightarrow \on v \in V_{\Si ,\phi}(v^+ \sim_{bb} v  \textrm{ and } v^- \sim_{cc} v\textrm{ for all }b\in\vec{a}\vec{b} \textrm{ and }c\in\vec{a}\vec{c})$.
\end{enumerate}
\end{thm}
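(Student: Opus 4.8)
The plan is to show that $G_{\Si,\phi}$ faithfully encodes the chase of $\Si$ starting from the canonical counterexample team for $\phi$, and then appeal to the standard chase characterization of the implication problem. First I would set up the correspondence between graphs and teams: from $G_{\Si,\phi}$ build a team $X_{\Si,\phi}$ whose assignments are the vertices, by choosing for each $\sim$-equivalence class of ``variable slots'' a fresh domain element and assigning to $s(v)$ (for a vertex $v$ and variable $x$) the element of the class of the slot $(v,x)$. The edge relation $u \sim_{ab} w$ is, by Definition \ref{verkko}, exactly an equivalence relation identifying slot $(u,a)$ with slot $(w,b)$, so this is well defined, and by construction $s_u(a) = s_w(b)$ in $X_{\Si,\phi}$ iff $u \sim_{ab} w$ in $G_{\Si,\phi}$. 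The two cases of $V_0$ are the two canonical counterexamples: for $\phi = \vec a \subseteq \vec b$ a single assignment $v$ (whose $\vec a$-values must then be matched), and for $\phi = \indep{\vec a}{\vec b}{\vec c}$ two assignments $v^+,v^-$ agreeing on $\vec a$ (the $E_0$ edges $(v^+,v^-)_{aa}$), for which the independence atom demands a witnessing assignment $v$ with $v^+ \sim_{bb} v$ and $v^- \sim_{cc} v$.

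Next I would argue that the inductive construction of $G_{n+1}$ from $G_n$ is precisely one round of chase steps on $X_{\Si,\phi}$ with respect to $\Si$. An inclusion atom $x_1\ldots x_k \subseteq y_1\ldots y_k \in \Si$ that fails at vertex $v$ is repaired by adding a new assignment $v_{\rm new}$ with $v_{\rm new}(y_i) = v(x_i)$, i.e.\ the edges $(v,v_{\rm new})_{x_iy_i}$; a conditional independence atom $\indep{\vec x}{\vec y}{\vec z} \in \Si$ applied to $u,w$ with $u(\vec x) = w(\vec x)$ (encoded by $u \sim_{xx} w$ for all $x \in \vec x$) is repaired by adding $v_{\rm new}$ with $v_{\rm new}(\vec x \vec y) = u(\vec x \vec y)$ and $v_{\rm new}(\vec x \vec z) = w(\vec x \vec z)$, i.e.\ the edges $(u,v_{\rm new})_{yy}$ and $(w,v_{\rm new})_{zz}$. (I should note that adding a fresh vertex every time, rather than reusing one, is exactly the behaviour of the oblivious/unrestricted chase, which is what matches the \emph{unrestricted} implication problem.) Hence $\bigcup_n X_{\Si,\phi}$ — equivalently the team read off from $G_{\Si,\phi}$ — is (up to the usual notion of homomorphic equivalence) the result of chasing the canonical counterexample, and a standard lemma gives that $\bigcup_n X_{\Si,\phi} \models \Si$.

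Then the two directions. For $(\Leftarrow)$: if the stated connectivity holds in $G_{\Si,\phi}$, then in the team $X := X_{\Si,\phi}$ we have $X \models \Si$ and the witnesses required by $\phi$ are present (for case 1, the vertex $w$ with $s_v(a_i) = s_w(b_i)$; for case 2, the vertex $v$ witnessing the independence atom for the pair $v^+,v^-$), so $X \models \phi$. Since this holds for the canonical model, and any failure of $\Si \models \phi$ would be witnessed by a team into which $X$ maps by a slot-respecting homomorphism preserving satisfaction of atoms, one gets $\Si \models \phi$ — more directly, one shows the contrapositive by the homomorphism theorem: any $\M, Y$ with $Y \models \Si$ and $Y \not\models \phi$ would, via the canonicity of the chase, have to realize all the identifications forced in $G_{\Si,\phi}$, contradiction. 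For $(\Rightarrow)$: conversely, if the connectivity fails, then $X_{\Si,\phi}$ itself is a team satisfying $\Si$ but not $\phi$ (in case 1, no assignment matches $v$ on the required coordinates; in case 2, the pair $v^+,v^-$ has no independence witness), so $\Si \not\models \phi$. The main obstacle I expect is the bookkeeping in the $(\Leftarrow)$ direction: making precise that $G_{\Si,\phi}$ is ``the'' chase — in particular that the connectivity relation $\sim$ computed in the infinite graph already captures all equalities that the chase would ever force (a fixpoint/monotonicity argument, using that $\sim_{ab}$ only grows as $n$ increases and that each finite-length witnessing path lives in some $G_n$), and matching this to the classical chase-theoretic characterization of unrestricted implication from \cite{Maier:1979:TID:320107.320115}, including handling the subtlety that tuples in the atoms may have repeated variables (so ``slots'' must be indexed by positions, not bare variables, or one must argue the $\sim$-relation is insensitive to this).
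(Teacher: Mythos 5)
Your proposal is correct in outline and takes essentially the same route as the paper: when the connectivity condition fails you build the countermodel team from the $\sim$-equivalence classes of slots (the paper's $s_u(x):=\prod_{y}\{w \mid u\sim_{xy}w\}$ is just a concrete choice of class representatives), and when it holds you map the graph, stage by stage, into an arbitrary team satisfying $\Sigma$ together with a violating pair of assignments, which is exactly the paper's inductive construction of the maps $f_n:V_n\to X$ with $f_n(u)(x)=f_n(w)(y)$ for $(u,w)_{xy}\in E_n$. The one caution is that the ``standard chase/homomorphism lemma'' you invoke is not available off the shelf in this team-semantics setting (and your intermediate remark that $X\models\Sigma$ and $X\models\phi$ for the canonical team would give $\Sigma\models\phi$ is a non sequitur on its own), but your contrapositive formulation together with the finite-stage monotonicity bookkeeping you flag is precisely what the paper proves explicitly, so the gap is one of detail rather than of idea.
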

\begin{proof}
We deal with cases $1$ and $2$ simultaneously. First we will show the direction from right to left.  So assume that the right-hand side assumption holds. We show that $\Si \models \phi$. Let $X$ be a team such that $X \models \Si$. We show that $X \models \phi$. For this, let $s,s'\in X$ be such that $s(\vec{a})=s'(\vec{a})$. If $\phi$ is $\indep{\vec{a}}{\vec{b}}{\vec{c}}$, then we need to find a $s''$ such that $s''(\vec{a}\vec{b}\vec{c})=s(\vec{a}\vec{b})s'(\vec{c})$. If $\phi$ is $a_1 \ldots a_k \subseteq b_1 \ldots b_k$, then we need to find a $s''$ such that $s(a_1\ldots a_k)=s''(b_1\ldots b_k)$. We will now define inductively, for each natural number $n$, a function $f_n: V_n \rightarrow X$ such that $f_n(u)(x)=f_n(w)(y)$ if $(u,w)_{xy}\in E_n$. This will suffice for the claim as we will later show.
\begin{itemize}
\item Assume that $n=0$.
\begin{enumerate}
\item\label{caseone} If $\phi$ is $a_1 \ldots a_k \subseteq b_1 \ldots b_k$, then $V_0= \{v\}$ and $E_0= \emptyset$, and we let $f_0(v):=s$.
\item\label{casetwo} If $\phi$ is $\indep{\vec{a}}{\vec{b}}{\vec{c}}$, then $V_0=\{v^+,v^-\}$ and $E_0=\{(v^+,v^-)_{aa} \mid a \in \vec{a}\}$. We let $f_0(v^+):=s$ and $f_0(v^-):=s'$. Then $f(v^+)(a)=f(v^-)(a)$, for $a \in \vec{a}$, as wanted.
\end{enumerate}
\item Assume that $n=m+1$, and that $f_m$ is defined so that $f_m(u)(x)=f_m(w)(y)$ if $(u, w)_{xy} \in E_m$. We let $f_{m+1}(u)=f_m(u)$, for $u\in V_m$. Assume that $v_{\rm new}\in V_{m+1}\setminus V_m$ and that there are $u \in V_m$ and $x_1 \ldots x_l\subseteq y_1\ldots y_l \in \Si$ such that 
$(u,v_{\rm new})_{x_iy_i}\in E_{m+1}\setminus E_m$, for $1 \leq i \leq l$. Since $X \models x_1 \ldots x_l\subseteq y_1\ldots y_l $, there is a $s_0\in X$ such that $f_{m+1}(u)(x_i)=s_0(y_i)$, for $1 \leq i \leq l$. We let $f_{m+1}(v_{\rm new}):=s_0$ when $f_{m+1}(u)(x_i)=f_{m+1}(v_{\rm new})(y_i)$, for $1 \leq i \leq l$, as wanted.

Assume then that $v_{\rm new}\in V_{m+1}\setminus V_m$ and that there are $u,w \in V_m$, $u \neq w$, and $\indep{\vec{x}}{\vec{y}}{\vec{z}} \in \Si$ such that $(u,v_{\rm new})_{yy},(w,v_{\rm new})_{zz}\in E_{m+1}\setminus E_m$, for $y\in \vec{x}\vec{y}$ and $z\in\vec{x}\vec{z}$. Then 
$u \sim_{xx} w$ in $G_m$, for $x \in \vec{x}$. This means that there are vertices $v_0, \ldots ,v_n$ and variables $x_0, \ldots ,x_{n}$, for $x \in \vec{x}$, such that $$(u,v_0)_{xx_0},(v_0,v_1)_{x_0x_1}, \ldots ,(v_{n-1},v_n)_{x_{n-1}x_n},(v_n,w)_{x_nx} \in E_m.$$ 
By the induction assumption then 
$$f_m(u)(x)=f_m(v_0)(x_0)=\ldots =f_m(v_n)(x_n)=f_m(w)(x).$$ Hence, since $X\models \indep{\vec{x}}{\vec{y}}{\vec{z}} $, there is a $s_0$ such that $s_0(\vec{x}\vec{y}\vec{z})=f_m(u)(\vec{x}\vec{y})f_m(w)(\vec{z})$. We let $f_{m+1}(v_{\rm new}):=s_0$ and conclude that $f_{m+1}(u)(y)=f_{m+1}(v_{\rm new})(y)$ and $f_{m+1}(w)(z)=f_{m+1}(v_{\rm new})(z)$, for $y\in \vec{x}\vec{y}$ and $z\in\vec{x}\vec{z}$. This concludes the construction.

\end{itemize}
Now, in case \ref{caseone} there is a $v \in V_{\Si ,\phi}$ such that $v^+ \sim_{bb} v $ and $v^- \sim_{cc} v$ for all $b\in\vec{a}\vec{b}$ and $c\in\vec{a}\vec{c}$. Let $n$ be such that each path witnessing this is in $G_n$. We want to show that choosing $s''$ as $f_n(v)$, $s''(\vec{a}\vec{b}\vec{c})=s(\vec{a}\vec{b})s'(\vec{c})$. Recall that $s=f_n(v^+)$ and $s'=f_n(v^-)$. First, let $b \in \vec{a}\vec{b}$. The case where $v=v^+$ is trivial, so assume that $v \neq v^+$ in which case there are vertices $v_0, \ldots ,v_n$ and variables $x_0, \ldots ,x_{n}$ such that 
$$(v^+,v_0)_{bx_0},(v_0,v_1)_{x_0x_1}, \ldots ,(v_{n-1},v_n)_{x_{n-1}x_n},(v_n,v)_{x_nb} \in E_n$$
when by the construction, $f_n(v^+)(b)=f_n(v)(b)$. Analogously $f_n(v^-)(c)=f_n(v)(c)$, for $c \in \vec{c}$, which concludes this case.

In case \ref{casetwo}, $s''$ is found analogously. This concludes the proof of the direction from right to left.

For the other direction, assume that the right-hand side assumption fails in $G_{\Si ,\phi}$. Again, we deal with both cases simultaneously. We will now construct a team $X$ such that $X \models \Si$ and $X \not\models \phi$. We let $X:=\{s_u \mid u \in V_{\Si, \phi}\}$ where each $s_u:\Var(\Si \cup \{\phi\}) \rightarrow \Po (V_{\Si ,\phi})^{|\Var(\Si \cup \{\phi\})|}$ is defined as follows:
$$s_u(x) := \prod_{y \in \Var(\Si \cup \{\phi\})} \{w \in V_{\Si ,\phi} \mid u\sim_{xy}  w\}. $$
%Here we assume that always $u \sim_{xx} u$. 
We claim that $s_u(x) = s_w(y) \Leftrightarrow u \sim_{xy} w$. Indeed, assume that $u \sim_{xy} w$. If now $v$ is in the set with the index $z$ of the product $s_u(x)$, then $u\sim_{xz} v$. Since $w \sim_{yx} u$, we have that $w \sim_{yz} v$. Thus $v$ is in the set with the index $z$ of the product $s_w(y)$. Hence by symmetry we conclude that $s_u(x)=s_w(y)$. For the other direction assume that $s_u(x) =s_w(y)$. Then consider the set with the index $y$ of the product $s_w(y)$. Since $w \sim_{yy} w$ by the definition, the vertex $w$ is in this set, and thus by the assumption it is in the set with the index $y$ of the product $s_u(x)$. It follows by the definition that $u \sim_{xy} w$ which shows the claim.

Next we will show that $X \models \Si$. So assume that $\indep{\vec{x}}{\vec{y}}{\vec{z}}\in \Si$ and that $s_u,s_w \in X$ are such that $s_u(\vec{x})=s_w(\vec{x})$. We need to find a $s_v \in X$ such that $s_v(\vec{x}\vec{y}\vec{z})=s_u(\vec{x}\vec{y})s_w(\vec{z})$. Since $u \sim_{xx} w$, for $x \in \vec{x}$, there is a $v\in G_{\Si ,\phi}$ such that $(u,v)_{yy},(w,v)_{zz}\in E_{\Si ,\phi}$, for $y\in \vec{x}\vec{y}$ and $z\in \vec{x}\vec{z}$. Then $s_u(\vec{x}\vec{y})=s_v(\vec{x}\vec{y})$ and $s_w(\vec{x}\vec{z})=s_v(\vec{x}\vec{z})$, as wanted. In case $x_1 \ldots x_l \sub y_1 \ldots y_l \in \Si$, $X\models x_1 \ldots x_l \sub y_1 \ldots y_l$ is shown analogously.

It suffices to show that $X \not\models \phi$. Assume first that $\phi$ is $\indep{\vec{a}}{\vec{b}}{\vec{c}}$. Then $s_{v^+}(\vec{a})=s_{v^-}(\vec{a})$, but by the assumption there is no $ v \in V_{\Si ,\phi}$ such that $v^+ \sim_{bb} v $ and $ v^- \sim_{cc} v$ for all $b\in\vec{a}\vec{b} $ and $c\in\vec{a}\vec{c}$. Hence there is no $s_v \in X$ such that $s_{v}(\vec{a}\vec{b})=s_{v^+}(\vec{a}\vec{b})$ and $s_{v}(\vec{a}\vec{c})=s_{v^-}(\vec{a}\vec{c})$ when $X \not\models \indep{\vec{a}}{\vec{b}}{\vec{c}}$. In case $\phi$ is $a_1 \ldots a_k \subseteq b_1 \ldots b_k$, $X \not\models\phi$ is shown analogously.
\end{proof}
\subsection{Completeness proof}\label{comp.proof}
We are now ready to prove the completeness. Let us first define some notation needed in the proof. We will write $x = y$ for syntactical identity, $x \equiv y$ for an atom of the form $xy\subseteq zz$ implying the identity of $x$ and $y$, and $\vec{x}\equiv\vec{y}$ for an conjunction the form $\bigwedge_{i \leq |\vec{x}|} \pr_i(\vec{x})\equiv \pr_i (\vec{y})$. Let $\vec{x}=(x_1, \ldots ,x_n)$ be a sequence listing $\Var(\Si\cup \{\phi\})$. If $\vec{x}_v$ is a vector of length $|\vec{x}|$ (representing vertex $v$ of the graph $G_{\Si ,\phi}$), and $\vec{p}=(x_{i_1}, \ldots ,x_{i_l})$ is a sequence of variables from $\vec{x}$, then we write $\vec{p}_v$ for 
$$ (\pr_{i_1} ( \vec{x}_v ), \ldots ,\pr_{i_l}(\vec{x}_v)).$$
\begin{thm}\label{complete}
Let $\Si \cup \{\phi\}$ be a finite set of conditional independence and inclusion atoms. Then $\Si \vdash \phi$ if $\Si \models \phi$.
\end{thm}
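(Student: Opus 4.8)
The plan is to deduce $\phi$ directly from the graph characterization of Theorem~\ref{char}, by simulating the construction of $G_{\Si,\phi}$ inside a formal deduction. Assume $\Si\models\phi$, so the right-hand side of Theorem~\ref{char} holds; since it is witnessed by finitely many finite connectedness paths, it already holds in some finite stage $G_N=(V_N,E_N)$. Fix an enumeration $\vec x=(x_1,\dots,x_n)$ of $\Var(\Si\cup\{\phi\})$ and recall the convention $\vec p_v=(\pr_{i_1}(\vec x_v),\dots,\pr_{i_l}(\vec x_v))$ for $\vec p=(x_{i_1},\dots,x_{i_l})$, writing $a_v$ for the length-one case. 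I will build a deduction from $\Si$ attaching a length-$n$ tuple $\vec x_v$ to each vertex of $V_N$ and maintaining the invariant that, at every point, (i) $\vec x_v\subseteq\vec x$ has been derived for each vertex $v$ already treated, with $\vec x_{v_0}=\vec x$ for the inclusion-goal root and $\vec x_{v^+}=\vec x$ in the independence case; and (ii) for each edge $(u,w)_{ab}\in E_N$ already treated, the atom $a_u\equiv b_w$ has been derived. All freshly introduced variables are chosen outside $\Var(\Si\cup\{\phi\})$ and outside all previously used variables, so the side conditions on new variables hold. Since $\equiv$ is symmetric and transitive as a derived relation (Projection and Permutation for symmetry, two applications of the Identity Rule for transitivity), (ii) propagates along paths, so $u\sim_{ab}w$ in $G_N$ yields $a_u\equiv b_w$.

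For the base case: if $\phi=a_1\dots a_k\subseteq b_1\dots b_k$ we set $\vec x_{v_0}:=\vec x$, so (i) is Reflexivity and $E_0=\emptyset$. If $\phi=\indep{\vec a}{\vec b}{\vec c}$ we first apply the Start Axiom with a fresh tuple $\vec t$ of length $|\vec c|$, obtaining $\vec a\vec c\subseteq\vec a\vec t\ja\indep{\vec a}{\vec b}{\vec t}\ja\vec a\vec t\subseteq\vec a\vec c$; we put $\vec x_{v^+}:=\vec x$ and let $\vec x_{v^-}$ carry the matching component of $\vec t$ at each position of $\vec c$, the literal variable at each position of $\vec a\setminus\vec c$, and a fresh variable elsewhere. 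Projection and Permutation together with Inclusion Introduction (padding the fresh positions) applied to $\vec a\vec t\subseteq\vec a\vec c$ then yield $\vec x_{v^-}\subseteq\vec x$, and the edges $(v^+,v^-)_{aa}$ are trivial. For the inductive step we treat the new vertices of $G_{m+1}\setminus G_m$ one by one. If a new vertex $v_{\rm new}$ arises from $u\in V_m$ and $p_1\dots p_l\subseteq q_1\dots q_l\in\Si$, I project $\vec x_u\subseteq\vec x$ to the positions of $\vec p$ to get $\vec p_u\subseteq\vec p$, apply Transitivity with the $\Si$-atom to get $\vec p_u\subseteq\vec q$, and then pad with fresh variables and reorder to obtain $\vec x_{v_{\rm new}}\subseteq\vec x$, where $\vec x_{v_{\rm new}}$ carries $(p_i)_u$ at the position of $q_i$ and fresh variables elsewhere. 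If instead $v_{\rm new}$ arises from $u\ne w\in V_m$ and $\indep{\vec p}{\vec q}{\vec r}\in\Si$ with $u\sim_{pp}w$ for all $p\in\vec p$, then $\vec p_u\equiv\vec p_w$ by (ii); after an Identity Rule step aligning $\vec p_w$ with $\vec p_u$, the projections $\vec p_u\vec q_u\subseteq\vec p\vec q$ and $\vec p_u\vec r_w\subseteq\vec p\vec r$ of $\vec x_u\subseteq\vec x$ and $\vec x_w\subseteq\vec x$ feed the Chase Rule (with conditioning tuple $\vec p_u$), giving $\vec p_u\vec q_u\vec r_w\subseteq\vec p\vec q\vec r$; padding and reordering again give $\vec x_{v_{\rm new}}\subseteq\vec x$. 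In either case the new edge atoms hold by construction, the $\equiv$-atoms needed to resolve repetitions being read off as projections of the intermediate inclusion atom and then applied through the Identity Rule. Only vertices of $V_m$ are used, so there is no circularity, and since $V_N$, $E_N$ are finite this is a finite deduction.

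It remains to cash in the graph condition. In the inclusion case the witness $w$ with $v_0\sim_{a_ib_i}w$ gives, via (ii) and $\vec x_{v_0}=\vec x$, atoms $a_i\equiv(b_i)_w$; projecting $\vec x_w\subseteq\vec x$ to the positions of $b_1,\dots,b_k$ gives $(b_1)_w\dots(b_k)_w\subseteq b_1\dots b_k$, and the Identity Rule rewrites each $(b_i)_w$ to $a_i$, producing $a_1\dots a_k\subseteq b_1\dots b_k=\phi$. In the independence case the witness $v$ with $v^+\sim_{bb}v$ for $b\in\vec a\vec b$ and $v^-\sim_{cc}v$ for $c\in\vec a\vec c$ gives $\vec a_v\equiv\vec a$, $\vec b_v\equiv\vec b$, and, using the choice of $\vec x_{v^-}$ together with the atoms $t_p\equiv t_{p'}$ extracted by projection from the Start Axiom when $\vec c$ repeats, $\vec c_v\equiv\vec t$; projecting $\vec x_v\subseteq\vec x$ to the positions of $\vec a,\vec b,\vec c$ and rewriting by the Identity Rule then yields $\vec a\vec b\vec t\subseteq\vec a\vec b\vec c$. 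Applying the Final Rule to this atom together with $\indep{\vec a}{\vec b}{\vec t}$ and $\vec a\vec c\subseteq\vec a\vec t$, all already derived, produces $\indep{\vec a}{\vec b}{\vec c}=\phi$. Since $\phi$ uses only variables of $\Var(\Si\cup\{\phi\})$, none of which is new in the deduction, this gives $\Si\vdash\phi$.

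The main obstacle I expect is making the invariant, together with the recipe for $\vec x_{v_{\rm new}}$, robust under degeneracies: a variable occurring twice in a $\Si$-atom or in $\phi$, or an overlap among $\vec a,\vec b,\vec c$, forces the naive ``reuse this variable'' prescription to identify two a priori distinct variables. One then has to first derive the pertinent $\equiv$-atom as a projection of an already-derived inclusion atom and only afterwards invoke the Identity Rule; pinning down one formulation of the invariant that both survives the inductive step and is strong enough for the concluding applications of the Final and Identity rules is the delicate point.
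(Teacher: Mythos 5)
Your proposal is correct and follows essentially the same route as the paper's proof: attach to each vertex $v$ of $G_{\Si,\phi}$ a tuple $\vec{x}_v$ with the invariants $\Si\vdasn\vec{x}_v\subseteq\vec{x}$ and $\Si\vdasn a_u\equiv b_w$ for edges, built by Start Axiom at the root, Transitivity/Inclusion Introduction for inclusion-generated vertices, Chase Rule for independence-generated vertices, and closed off by the Identity Rule (inclusion goal) or the Final Rule applied to the Start Axiom instance (independence goal). The degeneracy issue you flag is exactly the point the paper handles by first extracting the needed $\equiv$-atoms as projections of derived inclusion atoms and then applying the Identity Rule, so your treatment matches theirs.
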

\begin{proof}
Let $\Si$ and $\phi$ be such that $\Si \models \phi$. We will show that $\Si \vdash \phi$. 
We have two cases: either
\begin{enumerate}
\item $\phi$ is $x_{i_1}\ldots x_{i_m} \sub x_{j_1}\ldots x_{j_m}$ and, by Theorem \ref{char}, there is a $ w \in V_{\Si ,\phi}$ such that $v\sim_{x_{i_k}x_{j_k}} w$ for all $1 \leq k \leq m$, or
\item $\phi$ is $\indep{\vec{a}}{\vec{b}}{\vec{c}}$ and, by Theorem \ref{char}, there is a $ v \in V_{\Si ,\phi}$ such that $v^+\sim_{x_ix_i} v$ and $v^-\sim_{x_jx_j} v $ for all $x_i\in\vec{a}\vec{b} $ and $x_j\in\vec{a}\vec{c}$.
\end{enumerate}
Using this we will show how to create a deduction of $\phi$ from $\Si$. We write $\Si \vdasn \psi$ if $\psi$ appears as a step in the deduction. Recall that the new variables introduced in the deduction steps previously must not appear in $\phi$ but may appear in $\psi$. We will introduce for each $u\in V_{\Si,\phi}$ a sequence $\vec{x}_u$
of length $n$ (and possibly with repetitions) such that $\Si \vdasn \vec{x}_u \subseteq \vec{x} $. For each $(u,w)_{x_ix_j}\in E_{\Si ,\phi}$ we will also show that $\Si \vdasn\pr_i(\vec{x}_u) \equiv \pr_j(\vec{x}_w)$. We do this inductively for $V_n$ and $E_n$ as follows:
\begin{itemize}
\item Assume that $n=0$. Then we have two cases:
\begin{enumerate}
\item Assume that $\phi$ is $x_{i_1}\ldots x_{i_m} \sub x_{j_1}\ldots x_{j_m}$ when $V_0:=\{v\}$ and $E_0:=\emptyset$. Then we let $\vec{x}_{v}:=\vec{x}$ in which case we can derive $\vec{x}_v\sub\vec{x}$ by Reflexivity.

\item Assume that $\phi$ is $\indep{\vec{a}}{\vec{b}}{\vec{c}}$ when $V_0:=\{v^+,v^-\}$ and $E_0:=\{(v^+,v^-)_{x_ix_i} \mid x_i \in \vec{a}\}$. First we use Start Axiom to obtain \begin{equation}
\vec{a} \vec{c} \subseteq \vec{a} \vec{c}^* \ja \indep{\vec{a}}{\vec{b}}{\vec{c}^*} \ja \vec{a} \vec{c}^* \subseteq \vec{a} \vec{c}
\end{equation}
where $\vec{c}^*$ is a sequence of pairwise distinct new variables. Then using Inclusion Introduction and Projection and Permutation we may deduce 
\begin{equation}\label{inkl5}\vec{a}\vec{b}^*\vec{c}^*\vec{d}^*\sub \vec{a}\vec{b}\vec{c}\vec{d}
\end{equation} 
from $\vec{a}\vec{c}^* \sub \vec{a}\vec{c}$ where $\vec{d}$ lists $\vec{x} \setminus \vec{a}\vec{b}\vec{c}$ and $\vec{b}^*\vec{c}^*\vec{d}^*$ is a sequence of pairwise distinct new variables. By Projection and Permutation and Identity Rule we may assume that $\vec{a}\vec{b}^*\vec{c}^*\vec{d}^*$ has repetitions %at least 
exactly where $\vec{a}\vec{b}\vec{c}\vec{d}$ has.
Therefore we can list the variables of $\vec{a}\vec{b}^*\vec{c}^*\vec{d}^*$ in a sequence $\vec{x}_{v^-}$ of length $|\vec{x}|$ where $$\vec{a}\vec{b}^*\vec{c}^*\vec{d}^*= (\pr_{i_1}(\vec{x}_{v^-}), \ldots ,\pr_{i_l}(\vec{x}_{v^-})),$$ for $\vec{a}\vec{b}\vec{c}\vec{d}=(x_{i_1}, \ldots ,x_{i_l})$. Then $\vec{a}_{v^-}\vec{b}_{v^-}\vec{c}_{v^-}\vec{d}_{v^-}= \vec{a}\vec{b}^*\vec{c}^*\vec{d}^*$, and we can derive $ \vec{x}_{v^-} \sub \vec{x}$ from \eqref{inkl5} by Projection and Permutation. We also let $\vec{x}_{v^+}:=\vec{x}$ when $\vec{a}_{v^+}\vec{b}_{v^+}\vec{c}_{v^+}\vec{d}_{v^+}= \vec{a}\vec{b}\vec{c}\vec{d}$. Then $\vec{a}_{v^+}\equiv \vec{a}_{v^-}$ and $\vec{x}_{v^+} \sub \vec{x}$ are derivable by Reflexivity which concludes the case $n=0$.

\end{enumerate}

\item Assume that $n=m+1$ and for each $u \in V_{m}$ there is a sequence $\vec{x}_u$ such that 
$\der \vec{x}_u \subseteq \vec{x}$ and for each $(u,w)_{x_ix_j}\in E_{m}$ also $\der \pr_i(\vec{x}_u) \equiv \pr_j(\vec{x}_w)$. Assume that $v_{\rm new}\in V_{m+1}\setminus V_m$ is such that there are $u \in V_m$ and $x_{i_1} \ldots x_{i_l} \subseteq x_{j_i} \ldots x_{j_l}   \in\Si$ for which we have added new edges $(u,v_{\rm new})_{x_{i_k}x_{j_k}}$ to $V_{m+1}$, for $1 \leq k \leq l$. We will introduce a sequence $\vec{x}_{v_{\rm new}}$ such that $\der \vec{x}_{v_{\rm new}}\sub \vec{x}$ and $\der \pr_{i_k}(\vec{x}_{u})\equiv \pr_{j_k}(x_{v_{\rm new}})$, for $1 \leq k \leq l$.

By Projection and Permutation we deduce first 
\begin{equation}\label{eka}\pr_{i_1}(\vec{x}_u) \ldots \pr_{i_l}(\vec{x}_u)   \sub x_{i_1} \ldots x_{i_l}\end{equation}
from $\vec{x}_u \sub \vec{x}$. Then we obtain 
\begin{equation}\label{blah}\pr_{i_1}(\vec{x}_u) \ldots \pr_{i_l}(\vec{x}_u)  \subseteq x_{j_i} \ldots x_{j_l}
\end{equation}
from \eqref{eka} and $x_{i_1} \ldots x_{i_l} \subseteq x_{j_i} \ldots x_{j_l}$ by Transitivity.

Then by Reflexivity we may deduce $ \pr_{i_1}(\vec{x}_u)    \sub \pr_{i_1}(\vec{x}_u)$ from which we derive by Inclusion Introduction 
\begin{equation}\label{plöh} \pr_{i_1}(\vec{x}_u)y_1    \sub \pr_{i_1}(\vec{x}_u)\pr_{i_1}(\vec{x}_u)
\end{equation}
where $y_1$ is a new variable. Then from \eqref{blah} and \eqref{plöh} we derive by Identity Rule 
\begin{equation}\label{tähänkin_numero_tasa-arvon_nimissä} y_1 \pr_{i_2}(\vec{x}_{u})\ldots \pr_{i_l}(\vec{x}_u)  \sub  x_{j_1} \ldots x_{j_l}.
\end{equation}
Iterating this procedure $l$ times leads us to a formula
\begin{equation}\label{joo}\bigwedge_{1 \leq k \leq l} \pr_{i_k}(\vec{x}_u) \equiv y_k \ja y_1 \ldots y_l \sub x_{j_1} \ldots x_{j_l}
\end{equation}
where $y_1, \ldots ,y_l$ are pairwise distinct new variables. Let $x_{j_{l+1}}, \ldots ,x_{j_{l'}}$ list $\vec{x} \setminus \{x_{j_1}, \ldots ,x_{j_l}\}$. Repeating Inclusion Introduction for the inclusion atom in \eqref{joo} gives us a formula
\begin{equation}\label{plaah} y_1 \ldots y_{l'} \sub x_{j_1} \ldots x_{j_{l'}}
\end{equation}
where $y_{l+1}, \ldots ,y_{l'}$ are pairwise distinct new variables. Let $\vec{y}$ now denote the sequence $y_1 \ldots y_{l'}$ when 
\begin{equation}\label{jepjep}\bigwedge_{1 \leq k \leq l} \pr_{i_k}(\vec{x}_u) \equiv \pr_k(\vec{y}) \ja \vec{y} \sub x_{j_1} \ldots x_{j_{l'}}
\end{equation}
is the formula obtained from \eqref{joo} by replacing its inclusion atom with \eqref{plaah}.
By Projection and Permutation and Identity Rule we may assume that $\pr_{k}(\vec{y}) = \pr_{k'}(\vec{y})$ if and only if $j_k = j_{k'}$, for $1 \leq k \leq l'$. Analogously to the case $n=0$, we can then order the variables of $\vec{y}$ as a sequence $\vec{x}_{v_{\rm new}}$ of length $|\vec{x}|$ such that $\pr_{j_k}(\vec{x}_{v_{\rm new}})=\pr_k(\vec{y})$, for $1 \leq k \leq l'$. Then 
\begin{equation}\label{jepulis}\bigwedge_{1 \leq k \leq l} \pr_{i_k}(\vec{x}_u) \equiv \pr_{j_k}(\vec{x}_{v_{\rm new}}) \ja  \pr_{j_1}(\vec{x}_{v_{\rm new}}) \ldots \pr_{j_{l'}}(\vec{x}_{v_{\rm new}}) \sub x_{j_1} \ldots x_{j_{l'}}
\end{equation}
is the formula \eqref{jepjep}. By Projection and Permutation we can now deduce $\vec{x}_{v_{\rm new}} \sub \vec{x}$ from the inclusion atom in \eqref{jepulis}. Hence $\vec{x}_{v_{\rm new}}$ is such that $\der \vec{x}_{v_{\rm new}} \sub \vec{x}$ and $\der  \pr_{i_k}(\vec{x}_u) \equiv \pr_{j_k}(\vec{x}_{v_{\rm new}}) $, for $1 \leq k \leq l$. This concludes the case for inclusion.

Assume then that $v_{\rm new}\in V_{m+1}\setminus V_m$ is such that there are $u,w \in V_m$, $u \neq w$, and $\indep{\vec{p}}{\vec{q}}{\vec{r}}  \in\Si$ for which we have added new edges $(u,v_{\rm new})_{x_ix_i}, (w,v_{\rm new})_{x_jx_j}$ to $V_{m+1}$, for $x_i \in \vec{p}\vec{q}$ and $x_j \in \vec{p}\vec{r}$. We will introduce a sequence $\vec{x}_{v_{\rm new}}$ such that $\der \vec{x}_{v_{\rm new}}\sub \vec{x}$, and $\der \pr_{i}(\vec{x}_{u})\equiv \pr_{i}(x_{v_{\rm new}})$ and $\der \pr_{j}(\vec{x}_{w})\equiv \pr_{j}(x_{v_{\rm new}})$, for $x_i \in \vec{p}\vec{q}$ and $x_j \in \vec{p}\vec{r}$. The latter means that 
$$\der \vec{p}_u\vec{q}_u\equiv \vec{p}_{v_{\rm new}}\vec{q}_{v_{\rm new}}\ja  \vec{p}_w\vec{r}_w\equiv \vec{p}_{v_{\rm new}}\vec{r}_{v_{\rm new}}.$$

First of all, we know that $u \sim_{x_kx_k} w$ in $G_m$ for all $x_k \in \vec{p}$. Thus there are vertices $v_0, \ldots ,v_n\in V_m$ and variables $x_{i_0}, \ldots ,x_{i_{n}}$ such that
$$(u,v_0)_{x_kx_{i_0}} , (v_0,v_1)_{x_{i_0}x_{i_1}}, \ldots ,(v_{n-1},v_n)_{x_{i_{n-1}}x_{i_n}},\\(v_n,w)_{x_{i_n}x_k} \in E_m.$$
Hence by the induction assumption and Identity Rule, there are $\vec{x}_u$ and $\vec{x}_w$ such that $\der \vec{x}_u \sub \vec{x}$ and $\der \vec{x}_w \sub \vec{x}$, and  $\der \pr_k(\vec{x}_u)\equiv \pr_k(\vec{x}_w)$, for $x_k \in \vec{p}$. In other words, 
\begin{equation}\label{tästä}\der \vec{p}_u\equiv \vec{p}_w.
\end{equation}

 By Projection and Permutation we first derive 
\begin{equation}\label{label}\vec{p}_u\vec{q}_u \sub \vec{p}\vec{q}
\end{equation}
and
\begin{equation}\label{labello} \vec{p}_w\vec{r}_w\sub \vec{p}\vec{r}
\end{equation} 
from $\vec{x}_u \sub \vec{x}$ and $\vec{x}_w \sub \vec{x}$, respectively. Then we derive 
\begin{equation}\label{labelliino}\vec{p}_u\vec{r}_w\sub \vec{p}\vec{r}
\end{equation} 
from $\vec{p}_u\equiv \vec{p}_w$ and \eqref{labello} by Identity Rule. By Chase Rule we then derive
\begin{equation}\label{huh}\vec{p}_u\vec{q}_u\vec{r}_w\sub \vec{p}\vec{q}\vec{r}
\end{equation}
from $\indep{\vec{p}}{\vec{q}}{\vec{r}}$, \eqref{label} and \eqref{labelliino}.
Now it can be the case that $x_i\in \vec{p}\vec{q}$ and $x_i \in \vec{r}$, but $\pr_i(\vec{x}_u)\neq\pr_i(\vec{x}_w)$. Then we can derive 
\begin{equation}\label{heh}\pr_i(\vec{x}_u)\pr_i(\vec{x}_w) \sub x_ix_i
\end{equation} from \eqref{huh} by Projection and Permutation, and 
\begin{equation}\vec{p}_u\vec{q}_u\vec{r}_w(\pr_i(\vec{x}_u)/\pr_i(\vec{x}_w))\sub \vec{p}\vec{q}\vec{r}
\end{equation}
from \eqref{heh} and \eqref{huh} by Identity Rule. Let now $\vec{r}^*$ be obtained from $\vec{r}_w$ by replacing, for each $x_i\in \vec{p}\vec{q} \cap \vec{r}$, the variable $\pr_i(\vec{x}_w)$ with $\pr_i(\vec{x}_u)$. Iterating the previous derivation gives us then
\begin{equation}\label{numero}\vec{r}^* \equiv \vec{r}_w \ja \vec{p}_u\vec{q}_u\vec{r}^*\sub \vec{p}\vec{q}\vec{r}.
\end{equation}
Let $\vec{s}$ list the variables in $\vec{x}\setminus \vec{p}\vec{q}\vec{r}$. From the inclusion atom in \eqref{numero} we derive by Inclusion Introduction
\begin{equation}\label{joh}\vec{p}_u\vec{q}_u\vec{r}^*\vec{s}^* \sub \vec{p}\vec{q}\vec{r}\vec{s}
\end{equation}
where $\vec{s}^*$ is a sequence of pairwise distinct new variables. Then $\vec{p}_u\vec{q}_u\vec{r}^*\vec{s}^*$ has repetitions at least
%exactly
where $\vec{p}\vec{q}\vec{r}\vec{s}$ has, and hence we can define $\vec{x}_{v_{\rm{new}}}$ as the sequence of length $|\vec{x}|$ where 
\begin{equation}\vec{p}_u\vec{q}_u\vec{r}^*\vec{s}^*= (\pr_{i_1}(\vec{x}_{v_{\rm{new}}}), \ldots ,\pr_{i_l}(\vec{x}_{v_{\rm{new}}})),
\end{equation} for $\vec{p}\vec{q}\vec{r}\vec{s}=(x_{i_1}, \ldots ,x_{i_l})$. Then $\vec{p}_{v_{\rm new}}\vec{q}_{v_{\rm new}}\vec{r}_{v_{\rm new}}\vec{s}_{v_{\rm new}}=\vec{p}_u\vec{q}_u\vec{r}^*\vec{s}^*$, and we can thus derive
\begin{equation}\vec{x}_{v_{\rm new}} \sub \vec{x}
\end{equation} 
from \eqref{joh} by Projection and Permutation. Moreover,
 \begin{equation}\vec{p}_{v_{\rm new}}\vec{q}_{v_{\rm new}} \equiv \vec{p}_u\vec{q}_u
\end{equation} 
can be derived by Reflexivity, and
\begin{equation}\label{tää}\vec{p}_{v_{\rm new}}\vec{r}_{v_{\rm new}}  \equiv \vec{p}_w\vec{r}_w
\end{equation} 
is derivable since \eqref{tää} is the conjunction of $\vec{p}_u \equiv \vec{p}_w$ in \eqref{tästä} and $\vec{r}^* \equiv \vec{r}_w$ in \eqref{numero}. Hence $\vec{x}_{v_{\rm new}}$ is such that 
$$\der \vec{x}_{v_{\rm new}} \sub \vec{x} \ja \vec{p}_{v_{\rm new}}\vec{q}_{v_{\rm new}} \equiv \vec{p}_u\vec{q}_u \ja \vec{p}_{v_{\rm new}}\vec{r}_{v_{\rm new}}  \equiv \vec{p}_w\vec{r}_w.$$ 
This concludes the case $n=m+1$ and the construction.
\end{itemize}

Assume now first that $\phi$ is $\vec{a} \subseteq \vec{b}$ where $\vec{a}:=x_{i_1}\ldots x_{i_m}$ and $\vec{b}:=x_{j_1}\ldots x_{j_m}$. Then there is a $ w \in V_{\Si ,\phi}$ such that $v\sim_{x_{i_k}x_{j_k}} w$, for $1 \leq k \leq m$. Let $n$ be such that all the witnessing paths are in $G_n$, and let $1 \leq k \leq m$. We first show that
\begin{equation}\label{sama}\der  \pr_{i_k}(\vec{x}_{v})\equiv \pr_{j_k}(\vec{x}_{w}).
\end{equation}
If $w = v $ and $i_k =j_k$, then \eqref{sama} holds by Reflexivity. If $w \neq v $ or $i_k \neq j_k$, then there are vertices $v_0, \ldots ,v_p\in V_n$ and variables $x_{l_0}, \ldots ,x_{l_{p}}$ such that 
$$(v,v_0)_{x_{i_k}x_{l_0}} , (v_0,v_1)_{x_{l_0}x_{l_1}}, \ldots ,(v_{p-1},v_p)_{x_{l_{p-1}}x_{l_p}},(v_p,w)_{x_{l_p}x_{j_k}} \in E_n.$$ 
Then by the previous construction, 
\begin{equation}\der\pr_{i_k}(\vec{x}_{v})\equiv \pr_{l_0}(\vec{x}_{v_0})\ja \ldots \ja\pr_{l_p}(\vec{x}_{v_p})\equiv \pr_{j_k}(\vec{x}_{w})
\end{equation}
when $\der \pr_{i_k}(\vec{x}_{v})\equiv \pr_{j_k}(\vec{x}_{w})$ by Identity Rule. Therefore we conclude that 
\begin{equation}\label{inkl1}\der \vec{a}_v \equiv \vec{b}_w.
\end{equation} 
Since $\der \vec{x}_w \sub \vec{x}$ by the construction, then by Permutation and Projection \begin{equation}\label{inkl2}\der \vec{b}_w \sub \vec{b}.
\end{equation} 
Now $\vec{x}_v = \vec{x}$ as defined in case $1$ of step $n=0$, and therefore $\vec{a}_v=\vec{a}$. Thus we get $\vec{a} \sub \vec{b}$ from \eqref{inkl1} and \eqref{inkl2} using repeatedly Identity Rule. Since no new variables appear in $\vec{a} \sub \vec{b}$, we conclude that $\Si \vdash  \vec{a} \sub \vec{b}$.

Assume then that $\phi$ is $\indep{\vec{a}}{\vec{b}}{\vec{c}}$ when there is a $ v \in V_{\Si ,\phi}$ such that $v^+\sim_{x_ix_i} v$ and $v^-\sim_{x_jx_j} v $ for all $x_i\in\vec{a}\vec{b} $ and $x_j\in\vec{a}\vec{c}$. Analogously to the previous case, we can now find a sequence $\vec{x}_v$ such that
\begin{equation}\label{inkl3}
 \der \vec{x}_v \sub \vec{x}\
\end{equation} 
and
\begin{equation}\label{iden1}
\der \vec{a}_v\vec{b}_v\equiv\vec{a}_{v^+}\vec{b}_{v^+}\ja\vec{a}_v\vec{c}_v \equiv\vec{a}_{v^-}\vec{c}_{v^-}.
\end{equation}
By Projection and Permutation we may deduce 
\begin{equation}\label{inkl4}
\vec{a}_v\vec{b}_v\vec{c}_v \sub \vec{a}\vec{b}\vec{c}
\end{equation} 
from \eqref{inkl3}, and using repeatedly Projection and Permutation and Identity Rule we get
\begin{equation}\vec{a}_{v^+}\vec{b}_{v^+}\vec{c}_{v^-}\sub \vec{a}\vec{b}\vec{c}
\end{equation} from \eqref{iden1} and \eqref{inkl4}. Note that $\vec{a}_{v^+}\vec{b}_{v^+}\vec{c}_{v^-} = \vec{a}\vec{b}\vec{c}^*$ and that we have already derived $\vec{a} \vec{c} \subseteq \vec{a} \vec{c}^*$ and $\indep{\vec{a}}{\vec{b}}{\vec{c}^*}$ with Start Axiom (see case $2$ of step $n=0$). Therefore we can derive $\indep{\vec{a}}{\vec{b}}{\vec{c}}$ with one application of Final Rule. Since no new variables appear in $\indep{\vec{a}}{\vec{b}}{\vec{c}}$, we conclude that $\Si \vdash \indep{\vec{a}}{\vec{b}}{\vec{c}}$.
\end{proof}
By Theorem \ref{sound} and Theorem \ref{complete} we now have the following.
\begin{cor}
Let $\Si \cup \{\phi\}$ be a finite set of conditional independence and inclusion atoms. Then $\Si \vdash \phi$ if and only if $\Si \models \phi$.
\end{cor}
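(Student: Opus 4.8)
The plan is simply to read off the biconditional from the two halves already in hand. The forward direction $\Si \vdash \phi \Rightarrow \Si \models \phi$ is precisely Theorem \ref{sound}, and the backward direction $\Si \models \phi \Rightarrow \Si \vdash \phi$ is precisely Theorem \ref{complete}; conjoining them gives $\Si \vdash \phi \Leftrightarrow \Si \models \phi$. So at the level of this corollary the proof is one line, and there is nothing further to do except cite the two theorems.

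It is still worth recording where the substance sits, since a reader reconstructing the result from scratch must redo both halves. Soundness is delicate only because a deduction may introduce fresh variables (in the Start Axiom and in Inclusion Introduction), so a derived atom need not be a literal consequence of $\Si$ over the original variables; the remedy is to treat every new variable as existentially quantified, which is legitimate exactly because the Lax semantics of $\exists$ permits an assignment to be split among several successors. The inductive Lemma \ref{fakesound} pushes this through the deduction, rule by rule — the Chase Rule and Final Rule being the informative cases, the rest routine — and Theorem \ref{sound} then removes the leading $\exists\vec y$ by Locality (Proposition \ref{PROP}.\ref{thm:loc}), using that no new variable occurs in $\phi$.

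Completeness proceeds via the chase: Theorem \ref{char} reduces $\Si \models \phi$ to a purely combinatorial statement about connectivity in the graph $G_{\Si,\phi}$, and one then builds a deduction that imitates the inductive construction of $G_{\Si,\phi}$, attaching to each vertex $u$ a tuple $\vec x_u$ with $\der \vec x_u \sub \vec x$ and to each edge a corresponding $\equiv$-atom, applying Inclusion Introduction to mirror the inclusion-atom steps of the chase and the Chase Rule to mirror the independence-atom steps, and closing with the Final Rule (or, in the inclusion case, Projection/Permutation and the Identity Rule) to extract $\phi$ itself. The one real obstacle in the whole argument lies here: the bookkeeping that expands each chase step into a block of deduction steps while respecting the side condition that the new variables used along the way never appear in $\phi$. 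By comparison, soundness and the present combination are essentially immediate.
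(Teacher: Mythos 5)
Your proposal is correct and matches the paper exactly: the corollary is obtained by simply combining Theorem \ref{sound} (soundness) with Theorem \ref{complete} (completeness), which is precisely what the paper does. The additional commentary on where the real work lies is accurate but not needed for this statement.
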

The following example shows how to deduce $\indep{a}{b}{c} \vdash\indep{a}{c}{b}$ and $\indep{a}{b}{cd} \vdash \indep{a}{b}{c}$.
\begin{esim}
\begin{itemize}~\\
\item $\indep{a}{b}{c} \vdash\indep{a}{c}{b}$:
\begin{enumerate}
\item $ab \sub ab' \ja \indep{a}{c}{b'}\ja ab' \sub ab $ (Start Axiom)
\item $ac \sub ac$ (Reflexivity)
\item $\indep{a}{b}{c}\ja ab' \sub ab \ja ac \sub ac   \vdash ab'c \sub abc$ (Chase Rule)
\item $ab'c \sub abc \vdash acb' \sub acb$ (Projection and Permutation)
\item $ab \sub ab' \ja \indep{a}{c}{b'} \ja acb' \sub acb \vdash\indep{a}{c}{b}$ (Final Rule)
\end {enumerate}
\item $\indep{a}{b}{cd} \vdash \indep{a}{b}{c}$:
\begin{enumerate}
\item $ac\sub ac' \ja \indep{a}{b}{c'} \ja ac' \sub ac$ (Start Axiom)
\item $ac'd' \sub acd$ (Inclusion Introduction)
\item $ab \sub ab$ (Reflexivity)
\item $ \indep{a}{b}{cd}\ja ab \sub ab \ja ac'd' \sub acd  \vdash abc'd' \sub abcd$ (Chase Rule)
\item $abc' \sub abc$ (Projection and Permutation)
\item $ac\sub ac' \ja \indep{a}{b}{c'} \ja abc' \sub abc \vdash \indep{a}{b}{c}$ (Final Rule)
\end {enumerate}
\end{itemize}
\end{esim}
Our results shows that for any consequence $\indep{\vec{a}}{\vec{b}}{\vec{c}}$ of $\Si$ there is a deduction starting with an application of Start Axiom and ending with an application of Final Rule.

%$\vec{a} \vec{c} \subseteq \vec{a} \vec{c}^* \ja \indep{\vec{a}}{\vec{b}}{\vec{c}^*} \ja \vec{a} \vec{c}^* \subseteq \vec{a} \vec{c}$ and then deriving $\vec{a}\vec{b} \vec{c}^* \subseteq \vec{a}\vec{b} \vec{c}$ from it using our assumptions and axioms.

\bibliographystyle{plain}
\bibliography{biblio}

\begin{thebibliography}{10}

\bibitem{AV}
Samson Abramsky and Jouko V\"a\"an\"anen.
\newblock Dependence logic, social choice and quantum physics.
\newblock In preparation.

\bibitem{armstrong74}
William~W. Armstrong.
\newblock {Dependency Structures of Data Base Relationships.}
\newblock In {\em Proc. of IFIP World Computer Congress}, pages 580--583, 1974.

\bibitem{DBLP:journals/siamcomp/BeeriV84}
Catriel Beeri and Moshe~Y. Vardi.
\newblock Formal systems for tuple and equality generating dependencies.
\newblock {\em SIAM J. Comput.}, 13(1):76--98, 1984.

\bibitem{casanova83}
Marco~A. Casanova and V\^ania M.~P. Vidal.
\newblock Towards a sound view integration methodology.
\newblock In {\em Proceedings of the 2nd ACM SIGACT-SIGMOD symposium on
  Principles of database systems}, PODS '83, pages 36--47, New York, NY, USA,
  1983. ACM.

\bibitem{chandra85}
Ashok~K. Chandra and Moshe~Y. Vardi.
\newblock The implication problem for functional and inclusion dependencies is
  undecidable.
\newblock {\em SIAM Journal on Computing}, 14(3):671--677, 1985.

\bibitem{galliani12}
Pietro Galliani.
\newblock Inclusion and exclusion dependencies in team semantics: On some
  logics of imperfect information.
\newblock {\em Annals of Pure and Applied Logic}, 163(1):68 -- 84, 2012.

\bibitem{btxdoc}
Pietro Galliani.
\newblock General models and entailment semantics for independence logic.
\newblock {\em Notre Dame Journal of Formal Logic}, 54(2):253--275, 2013.

\bibitem{galhankon13}
Pietro Galliani, Miika Hannula, and Juha Kontinen.
\newblock Hierarchies in independence logic.
\newblock In {\em CSL}, pages 263--280, 2013.

\bibitem{gallhella13}
Pietro Galliani and Lauri Hella.
\newblock Inclusion logic and fixed point logic.
\newblock In {\em CSL}, pages 281--295, 2013.

\bibitem{vaananen13}
Pietro Galliani and Jouko~A. V{\"a}{\"a}n{\"a}nen.
\newblock On dependence logic.
\newblock {\em CoRR}, abs/1305.5948, 2013.

\bibitem{geiger91}
Dan Geiger, Azaria Paz, and Judea Pearl.
\newblock Axioms and algorithms for inferences involving probabilistic
  independence.
\newblock {\em Information and Computation}, 91(1):128--141, 1991.

\bibitem{gradel10}
Erich Gr\"adel and Jouko V\"a\"an\"anen.
\newblock Dependence and independence.
\newblock {\em Studia Logica}, 101(2):399--410, 2013.

\bibitem{hannula13}
Miika Hannula.
\newblock Axiomatizing first-order consequences in independence logic.
\newblock {\em CoRR}, abs/1304.4164, 2013.

\bibitem{Herrmann1995221}
Christian Herrmann.
\newblock On the undecidability of implications between embedded multivalued
  database dependencies.
\newblock {\em Information and Computation}, 122(2):221 -- 235, 1995.

\bibitem{Herrmann2006}
Christian Herrmann.
\newblock Corrigendum to "on the undecidability of implications between
  embedded multivalued database dependencies" [inform. and comput. 122(1995)
  221-235].
\newblock {\em Inf. Comput.}, 204(12):1847--1851, 2006.

\bibitem{DBLP:conf/wollic/KontinenLV13}
Juha Kontinen, Sebastian Link, and Jouko~A. V{\"a}{\"a}n{\"a}nen.
\newblock Independence in database relations.
\newblock In Leonid Libkin, Ulrich Kohlenbach, and Ruy J. G.~B. de~Queiroz,
  editors, {\em WoLLIC}, volume 8071 of {\em Lecture Notes in Computer
  Science}, pages 179--193. Springer, 2013.

\bibitem{kontinenv11}
Juha Kontinen and Jouko~A. V{\"a}{\"a}n{\"a}nen.
\newblock Axiomatizing first-order consequences in dependence logic.
\newblock {\em Ann. Pure Appl. Logic}, 164(11):1101--1117, 2013.

\bibitem{Maier:1979:TID:320107.320115}
David Maier, Alberto~O. Mendelzon, and Yehoshua Sagiv.
\newblock Testing implications of data dependencies.
\newblock {\em ACM Trans. Database Syst.}, 4(4):455--469, December 1979.

\bibitem{mitchell83}
John~C. Mitchell.
\newblock The implication problem for functional and inclusion dependencies.
\newblock {\em Information and Control}, 56(3):154--173, 1983.

\bibitem{naumovre}
Pavel Naumov and Brittany Nicholls.
\newblock {R.E.} axiomatization of conditional independence.
\newblock In {\em Proceedings of the 14th Conference on Theoretical Aspects of
  Rationality and Knowledge (TARK `13)}, pages 131--137, 2013.

\bibitem{GP}
Gianluca Paolini.
\newblock Dependence logic in algebra and model theory.
\newblock MSc Thesis, University of Amsterdam, May 2013.

\bibitem{DBLP:journals/jcss/Paredaens80}
Jan Paredaens.
\newblock The interaction of integrity constraints in an information system.
\newblock {\em J. Comput. Syst. Sci.}, 20(3):310--329, 1980.

\bibitem{DBLP:journals/jacm/SadriU82}
Fereidoon Sadri and Jeffrey~D. Ullman.
\newblock Template dependencies: A large class of dependencies in relational
  databases and its complete axiomatization.
\newblock {\em J. ACM}, 29(2):363--372, 1982.

\bibitem{vaananen07}
Jouko V\"a\"an\"anen.
\newblock {\em Dependence Logic}.
\newblock Cambridge University Press, 2007.

\bibitem{FYJV}
Jouko V{\"a}{\"a}n{\"a}nen and Fan Yang.
\newblock Propositional dependence logic.
\newblock In preparation, 2013.

\bibitem{DBLP:journals/jcss/YannakakisP82}
Mihalis Yannakakis and Christos~H. Papadimitriou.
\newblock Algebraic dependencies.
\newblock {\em J. Comput. Syst. Sci.}, 25(1):2--41, 1982.

\end{thebibliography}
\end{document}